\documentclass[12pt,a4paper,reqno]{amsart}
\usepackage[utf8]{inputenc}
\usepackage[english]{babel}
\usepackage{amsmath}
\usepackage{enumerate}
\usepackage{hyperref}
\usepackage{cite}
\usepackage{mathtools}
\usepackage{mathrsfs}
\usepackage{soul, xcolor}
\usepackage{wasysym}
\usepackage{amsmath,amssymb,xpatch,relsize,graphics,graphicx,amsthm}
\usepackage{array}  
\usepackage{lscape} 
\usepackage{booktabs} 
\usepackage{caption} 
\usepackage{float,graphicx}
\usepackage{subcaption}
\usepackage{epstopdf} 
\xapptocmd\normalsize{%
	\abovedisplayskip=12pt plus 3pt minus 9pt
	\abovedisplayshortskip=0pt plus 3pt
	\belowdisplayskip=12pt plus 3pt minus 9pt
	\belowdisplayshortskip=7pt plus 3pt minus 4pt
}{}{}
\theoremstyle{definition}
\newtheorem{definition}{Definition}[section]

\theoremstyle{plain}
\newtheorem{theorem}[definition]{Theorem}

\newtheorem{lemma}[definition]{Lemma}

\title{Various Geometric Properties of a class related to Special Functions}
\author[A. Kumar]{Ayush Kumar}
\address{Department of Mathematics, University of Delhi, Delhi--110 007, India}
\email{akumar7@maths.du.ac.in}
\author[N.K. Jain]{Naveen Kumar Jain}
\address {Department of Mathematics, Aryabhatta College, Delhi-110 021,India}
\email{naveenjain@aryabhattacollege.ac.in}
\numberwithin{equation}{section}
\allowdisplaybreaks

\keywords{Special functions;  Wright Function;  Radii Problems; Univalent Functions\\}

\subjclass[2000]{33E12, 33C99, 30C99, 30C45}

\begin{document}
  \maketitle

  \begin{abstract}
    In this paper, we will discuss several radii problems related to Wright Function involving four parameters.
  \end{abstract}

  \section{Introduction}
  Let \( f \) be an analytic function in the open unit disk \( \mathbb{D} = \mathbb{D}_1\) where \( \mathbb{D}_r=\{ z \in \mathbb{C} : |z| < r \} \), normalized by \( f(0) = 0 \) and \( f'(0) = 1 \). The class of these functions is denoted by $\mathcal{A}$. Two analytic functions $f$ and $g$ in $\mathbb{D}$ are related by subordination, denoted $f(z) \prec g(z)$, if there exists an analytic function $w$ in $\mathbb{D}$ with $w(0) = 0$ and $|w(z)| < 1$ for all $z \in \mathbb{D}$ such that $f(z) = g(w(z))$. Whenever $f(z) \prec g(z)$, it holds that $f(0) = g(0)$ and $f(\mathbb{D}) \subseteq g(\mathbb{D})$. Moreover, if $g$ is univalent in $\mathbb{D}$, then $f(z) \prec g(z)$ if and only if $f(0) = g(0)$ and $f(\mathbb{D}) \subseteq g(\mathbb{D})$.
  The $\mathcal{S}^* (\alpha)$- radius of a normalized function $f$ is given by
  \[
   \mathcal{S}^* (\alpha)-radius=\sup\left\{ r \in \mathbb{R}^+ : \Re\left(\dfrac{zf'(z)}{f(z)}\right)>\alpha, \alpha \in \mathbb{D}_r \right\}.
  \]
  Similarly, the $\mathcal{C}(\alpha)$- radius can be defined. Note that the classes $\mathcal{S}^*(\alpha)$  and $\mathcal{C}(\alpha)$ are classes of starlike and convex functions of order $\alpha$. From the subordination point of view, these radii can be generalized to the Ma and Minda classes defined as
  \[
    \mathcal{S}^*(\varphi) = \left\{ f \in \mathcal{A} : \frac{z f'(z)}{f(z)} \prec \varphi(z) \right\}
    \quad \text{and} \quad
    \mathcal{C}(\varphi) = \left\{ f \in \mathcal{A} : 1 + \frac{z f''(z)}{f'(z)} \prec \varphi(z) \right\},
  \]
  \text{where}
  \begin{itemize}
    \item $\varphi$ is analytic and univalent with $\Re\varphi(z) > 0$, $\varphi'(0) > 0$;
    \item $\varphi(\mathbb{D})$ is starlike with respect to $\varphi(0) = 1$ and symmetric about real axis.
  \end{itemize}

 \begin{definition}
Let \( f \in \mathcal{A} \) be a special function. Then \( S^*(\varphi) \)-radius and \( \mathcal{C}(\varphi) \)-radius of \( f \) are defined as follows:
\[
r^*_{\varphi}(f) = \sup \left\{ r \in \mathbb{R}^+ : \frac{z f'(z)}{f(z)} \in \varphi(\mathbb{D}),\ z \in \mathbb{D}_r \right\}
\]
and
\[
r^c_{\varphi}(f) = \sup \left\{ r \in \mathbb{R}^+ : 1 + \frac{z f''(z)}{f'(z)} \in \varphi(\mathbb{D}),\ z \in \mathbb{D}_r \right\},
\]
respectively.
\end{definition}

\begin{definition}
Suppose that $f$ is a member of the class $\mathcal{A}$. Assume $\gamma$ belongs to the interval $\left( -\frac{\pi}{2}, \frac{\pi}{2} \right)$ and $0 \leq \alpha < 1$.

\vspace{1em}
A function $f$ analytic in the unit disk is called \textit{$\gamma$-spirallike of order $\alpha$} if and only if
\[
\mathrm{Re} \left( e^{-i\gamma}\, \frac{z f'(z)}{f(z)} \right) > \alpha \cos\gamma
\]
holds throughout $\mathbb{D}$. The collection of all such functions shall be denoted by $S_p^\gamma(\alpha)$.

\vspace{2em}
A function is said to be a member of the family $CS_p^\gamma(\alpha)$ of \textit{convex $\gamma$-spirallike functions of order $\alpha$} provided that
\[
\mathrm{Re} \left( e^{-i\gamma} \left[ 1 + \frac{z f''(z)}{f'(z)} \right] \right) > \alpha \cos\gamma
\]
holds for all $z \in \mathbb{D}$.
\end{definition}
\vspace{2em}
\begin{definition}
Let $g$ represent the normalized version of a particular special function. Define the radius of $\gamma$-spirallikeness of order $\alpha$ for $g$ as
\[
R_{sp}(\gamma, \alpha)
= \sup \left\{ r \in \mathbb{R}^+ : \mathrm{Re} \left( e^{-i\gamma} \frac{z g'(z)}{g(z)} \right) > \alpha \cos\gamma,\; z \in \mathbb{D}_r \right\}.
\]
Similarly, the radius for convex $\gamma$-spirallike functions of order $\alpha$ is given by
\[
R_{sp}^c(\gamma, \alpha)
= \sup \left\{ r \in \mathbb{R}^+ : \mathrm{Re} \left( e^{-i\gamma} \left[ 1 + \frac{z g''(z)}{g'(z)} \right] \right) > \alpha \cos\gamma,\; z \in \mathbb{D}_r \right\}.
\]
\end{definition}
\begin{definition}
  The \textit{exponential radius of starlikeness} $\mathcal{R}_e^*(f)$ of a function $f$ defined on $\mathbb{D}_r$ is the largest positive real number $r$ that satisfies $|\log(zf'(z)/f(z))| < 1$ for all $z \in \mathbb{D}_r$. Similarly, the \textit{exponential radius of convexity} $\mathcal{R}_e^c(f)$ of a function $f$ is the largest positive real number $r$ that satisfies $|\log(1 + zf''(z)/f'(z))| < 1$ for all $z \in \mathbb{D}_r$.
\end{definition}
In 2023 \cite{kazim23}, Kazimoglu and Gangania studied the above radii problems for different classes of special functions. In 2024 \cite{kazim24}, they studied geometric properties of functions containing derivatives of Bessel function. Several other classes being studied by authors (see \cite{ravi2017}, \cite{naz2024}).

The following lemma (ref. art. \cite{deniz2017}) will be used in our main results.
\begin{lemma}\label{lem1}
Let $a>b>r\ge |z|$ and $\lambda\in[0,1]$. Then
\[
\left|\frac{z}{\,b-z\,}-\lambda\,\frac{z}{\,a-z\,}\right|
\;\le\;
\frac{r}{\,b-r\,}-\lambda\,\frac{r}{\,a-r\,}.
\]
As immediate consequences, one has
\[
\Re\!\left(\frac{z}{\,b-z\,}-\lambda\,\frac{z}{\,a-z\,}\right)
\;\le\;
\frac{r}{\,b-r\,}-\lambda\,\frac{r}{\,a-r\,},
\qquad
\Re\!\left(\frac{z}{\,b-z\,}\right)
\;\le\;
\left|\frac{z}{\,b-z\,}\right|
\;\le\;
\frac{r}{\,b-r\,}.
\]

If, in addition, $b>a>r\ge |z|$, then
\[
\left|\frac{1}{(a+z)(b-z)}\right|
\;\le\;
\frac{1}{(a-r)(b+r)}.
\]
\end{lemma}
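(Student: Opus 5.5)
The plan is to treat the two inequalities of Lemma~\ref{lem1} separately. The first will come from a power series expansion with nonnegative coefficients. The second will come from an explicit minimization of the modulus on the circle $|z|=r$, followed by the maximum modulus principle.

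\emph{First inequality.} Since $|z|\le r<b<a$, both fractions expand as geometric series:
\[
\frac{z}{b-z}-\lambda\frac{z}{a-z}=\sum_{n\ge1}\Bigl(\frac{1}{b^{n}}-\frac{\lambda}{a^{n}}\Bigr)z^{n}.
\]
Because $a>b>0$ and $0\le\lambda\le1$, every coefficient satisfies $b^{-n}-\lambda a^{-n}\ge b^{-n}-a^{-n}>0$. The triangle inequality then gives
\[
\Bigl|\sum_{n\ge1}c_nz^n\Bigr|\le\sum_{n\ge1}c_nr^n=\frac{r}{b-r}-\lambda\frac{r}{a-r}.
\]
The two ``immediate consequences'' follow at once: use $\Re w\le|w|$, and take the special case $\lambda=0$.

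\emph{Second inequality.} Here the naive bound $|a+z|\ge a-r$, $|b-z|\ge b-r$ is too weak, so I would instead minimize the product of the moduli exactly. The function $1/((a+z)(b-z))$ is analytic on the closed disk $|z|\le r$, because $a,b>r$. By the maximum modulus principle it therefore suffices to treat $|z|=r$. Write $z=re^{i\theta}$ and $c=\cos\theta\in[-1,1]$. Then
\[
|a+z|^2|b-z|^2=(a^2+r^2+2arc)(b^2+r^2-2brc).
\]
This is a quadratic in $c$ with leading coefficient $-4abr^2<0$, so it is concave, and its minimum over $[-1,1]$ is attained at an endpoint. The two endpoint values are $(a+r)^2(b-r)^2$ at $c=1$ and $(a-r)^2(b+r)^2$ at $c=-1$. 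Comparing them,
\[
(a-r)(b+r)-(a+r)(b-r)=2r(a-b)<0
\]
because $b>a$, so the minimum is $(a-r)^2(b+r)^2$. Taking reciprocals and square roots gives the claimed bound $1/((a-r)(b+r))$.

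The main obstacle is this second part. Term-by-term series estimates fail there because of sign changes: the partial fraction coefficients $b^{-n-1}+(-1)^na^{-n-1}$ are negative for odd $n$. The concavity-in-$\cos\theta$ argument is what replaces the series approach.
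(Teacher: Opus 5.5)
Your proof is correct and complete. The paper gives no proof of this lemma; it quotes it from \cite{deniz2017}, so there is no in-paper argument to compare with.

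Both halves of your argument are sound:
\begin{itemize}
\item In the first part, the Taylor coefficients $b^{-n}-\lambda a^{-n}$ are nonnegative, so the triangle inequality gives the bound directly.
\item In the second part, you use the maximum modulus principle and then concavity of $(a^2+r^2+2arc)(b^2+r^2-2brc)$ in $c=\cos\theta$. The minimum is at $c=-1$ because $b>a$.
\item You also correctly read ``in addition, $b>a$'' as a separate hypothesis, since it cannot hold together with $a>b$.
\end{itemize}

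One correction concerns your closing remark: the series method does \emph{not} fail in the second part. By partial fractions,
\[
\frac{1}{(a+z)(b-z)}=\sum_{n\ge0}c_nz^n,\qquad c_n=\frac{b^{-n-1}+(-1)^na^{-n-1}}{a+b}.
\]
When $b>a$ the signs alternate exactly, since $(-1)^nc_n=\bigl(a^{-n-1}+(-1)^nb^{-n-1}\bigr)/(a+b)>0$. Hence $|c_n|r^n=c_n(-r)^n$, and the triangle inequality gives
\[
\Bigl|\frac{1}{(a+z)(b-z)}\Bigr|\le\sum_{n\ge0}|c_n|r^n=\sum_{n\ge0}c_n(-r)^n=\frac{1}{(a-r)(b+r)}.
\]
Equivalently, $w\mapsto 1/((a-w)(b+w))$ has positive Taylor coefficients. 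So both parts of the lemma are instances of one principle: a power series with nonnegative coefficients attains its maximum modulus on $|w|\le r$ at $w=r$. In the second part this happens after the substitution $w=-z$, which is why the extremal point there is $z=-r$.

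Your concavity argument is a perfectly valid alternative. The series route simply makes the two halves uniform.
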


\begin{lemma}\label{lem2}
For $\frac{1}{2} (1 + \frac{1}{e}) \leq a \leq \frac{1}{2}(1 + e)$, let $r_a$ be given by
\[
r_a :=
\begin{cases}
a - \frac{1}{e}, & \frac{1}{2} (1 + \frac{1}{e}) \leq a \leq \frac{1}{2} (e + \frac{1}{e}); \\
e - a, & \frac{1}{2} (e + \frac{1}{e}) \leq a \leq \frac{1}{2} (1 + e).
\end{cases}
\]
Then $1 \in \{w : |w - a| < r_a\} \subseteq \Omega_e$.
\end{lemma}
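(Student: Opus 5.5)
Here $\Omega_e$ is understood as the image domain $\{w\in\mathbb{C} : |\log w|<1\}$ of $\varphi(z)=e^z$, the region tied to the exponential radius of starlikeness. The plan is to describe $\Omega_e$ in polar form and reduce the disk inclusion to a one-variable inequality. Writing $w=\rho e^{i\theta}$ with $|\theta|<\pi$, we have $\log w=\log\rho+i\theta$, so
\[
\Omega_e=\{\rho e^{i\theta} : (\log\rho)^2+\theta^2<1\}.
\]
The boundary curve is $\rho(\theta)=e^{\pm\sqrt{1-\theta^2}}$ for $|\theta|\le 1$. It meets the real axis at $1/e$ and $e$, and the whole domain is symmetric about the real axis.

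\textbf{Step 1 (the point $1$ lies in the disk).} We need $|1-a|<r_a$. On the first range, $r_a-|1-a|=a-\tfrac1e-|1-a|$; this is positive if $a\ge1$, and if $a<1$ it equals $2a-1-\tfrac1e$, which is $\ge 0$ because $a\ge \frac12(1+\frac1e)$. At the endpoint $a=\frac12(1+\frac1e)$ the disk is tangent at $1$, so $1$ lies on the boundary; we either read the statement with a closed disk there or note the endpoint degeneracy. The second range is handled symmetrically using $a\le \frac12(1+e)$.

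\textbf{Step 2 (the disk lies in $\Omega_e$).} The disk $|w-a|<r_a$ is centered on the real axis, and its leftmost and rightmost points $a\mp r_a$ lie in $[1/e,e]$. Indeed, for the first case the leftmost point is exactly $1/e$ and the rightmost is $2a-\frac1e\le e$. The goal is to show that the distance from $a$ to $\partial\Omega_e$ equals $\min(a-\frac1e,\,e-a)$. It suffices to show that for every boundary point $w(\theta)=e^{s}e^{i\theta}$ with $s=\pm\sqrt{1-\theta^2}$,
\[
|w(\theta)-a|^2=e^{2s}-2ae^{s}\cos\theta+a^2\ \ge\ r_a^2 .
\]

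\textbf{Step 3 (main obstacle).} The hard part is minimizing $F(\theta)=e^{2s}-2ae^{s}\cos\theta+a^2$ over $\theta\in[-1,1]$ on each branch, and showing the minimum is attained at $\theta=0$, i.e.\ at $w=1/e$ on the inner branch or $w=e$ on the outer branch. I would try a monotonicity argument: differentiate in $\theta$, using $ds/d\theta=-\theta/s$. For the inner branch, $s=-\sqrt{1-\theta^2}$, so $e^{s}$ increases with $|\theta|$ while $\cos\theta$ decreases. Then $F'(\theta)\ge 0$ for $\theta>0$ reduces to an elementary inequality using $a\le\frac12(e+\frac1e)$, $\sin\theta\le\theta$ and $e^s\ge 1/e$. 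The outer branch is similar and is only needed for $a\ge \frac12(e+\frac1e)$. If the calculus becomes messy, a fallback is to compare with Taylor bounds $\cos\theta\le 1-\theta^2/2+\theta^4/24$ and $e^{\pm\sqrt{1-\theta^2}}$ expansions. This yields $F(\theta)\ge F(0)=r_a^2$ in the relevant regime. The crossover value $a=\frac12(e+\frac1e)$ is exactly where $a-\frac1e=e-a$, which confirms the two cases.
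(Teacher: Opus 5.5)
The paper states this lemma without proof; it is quoted from the literature, essentially the Mendiratta--Nagpal--Ravichandran lemma on disks inside $\Omega_e=\{w:|\log w|<1\}$. So your argument has to stand on its own. Steps 1 and 2 are fine, including your endpoint remark: at $a=\frac12(1+\frac1e)$ and $a=\frac12(1+e)$ one has $|1-a|=r_a$.

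The gap is Step 3, which carries all the content, and as planned it would not work. It fails in two ways.

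\textbf{Both branches are needed in both cases.} In the first case you must also show that the outer branch satisfies $|w-a|\ge a-\frac1e$. This is not automatic: at $a=\frac12(e+\frac1e)$ the outer branch touches the circle $|w-a|=r_a$ at $w=e$. Symmetrically, the inner branch matters in the second case. Your claim that each branch is closest to $a$ at $\theta=0$ is also false. For $a=1$ the outer branch ends at $e^{i}$, and $|e^{i}-1|=2\sin\frac12\approx0.96<e-1$.

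\textbf{The estimates you name do not close the inner-branch monotonicity.} On the inner branch,
\[
F'(\theta)=\frac{2e^{s}}{\sqrt{1-\theta^2}}\,G(\theta),\qquad G(\theta)=\theta e^{s}-a\bigl(\theta\cos\theta-\sqrt{1-\theta^2}\,\sin\theta\bigr).
\]
Replacing $e^s$ by $1/e$ gives a lower bound whose value at $\theta=1$ is $\frac1e-a\cos 1$. This is negative for every admissible $a$, since $1/(e\cos1)\approx0.681<\frac12(1+\frac1e)$. Also, $\sin\theta\le\theta$ bounds the bracket from below, which is the wrong side. The Taylor fallback is not an argument.

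\textbf{A short route that avoids branch-by-branch calculus.} Put $a_0=\frac12(e+\frac1e)=\cosh1$, so $r_{a_0}=\sinh1$. If $a\le a_0$ and $|w-a|<a-\frac1e$, then $|w-a_0|\le|w-a|+(a_0-a)<\sinh1$. If $a\ge a_0$ and $|w-a|<e-a$, then $|w-a_0|<e-a_0=\sinh1$. So every disk in the lemma lies in $\{|w-\cosh1|<\sinh1\}$, and only this one disk needs treatment.

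For a boundary point $w=e^{s+i\theta}$ with $s^2+\theta^2=1$,
\[
|w-\cosh1|^2-\sinh^2 1=2e^{s}\bigl(\cosh s-\cosh1\,\cos\theta\bigr).
\]
This expression is even in $s$, so both branches are handled at once. Set $u=\theta^2\in[0,1]$ and $k(u)=\cosh\sqrt{1-u}-\cosh1\,\cos\sqrt u$. Then $k(0)=0$. Since $\sin y/y$ decreases and $\sinh x/x$ increases on $(0,1]$,
\[
k'(u)=\frac12\left(\cosh1\,\frac{\sin\sqrt u}{\sqrt u}-\frac{\sinh\sqrt{1-u}}{\sqrt{1-u}}\right)\ge\frac12\bigl(\cosh1\,\sin1-\sinh1\bigr)>0,
\]
because $\cosh1\sin1\approx1.30>\sinh1\approx1.18$. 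Hence $|w-\cosh1|\ge\sinh1$ on $\partial\Omega_e$. Since $\cosh1\in\Omega_e$ and the disk is connected, the disk lies in $\Omega_e$.
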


The inequality [\cite{ravi1997}, Lemma 3.2, p. 310]:
\begin{equation}\label{eql1}
\left| \frac{z}{z - z_k} + \frac{r^2}{R^2 - r^2} \right| \leq \frac{Rr}{R^2 - r^2}, \tag{1.1}
\end{equation}
where $|z| \leq r < 1$ and $|z_k| = R > r$ plays a vital role in proving our results. The following result will be also needed in our investigation which is a direct consequence of \eqref{eql1}.

\begin{lemma}\label{lem3}
For $|z| \leq r < 1$, $|z_k| = \alpha > r$ and $\beta > \alpha$, we have
\[
\left| \frac{z}{z - z_k} + \frac{r^2}{\alpha^2 - r^2} + \frac{r^2}{\beta^2 - r^2} \right| \leq \frac{\alpha r}{\alpha^2 - r^2} + \frac{\beta r}{\beta^2 - r^2}.
\]
\end{lemma}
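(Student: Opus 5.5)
The plan is to treat Lemma~\ref{lem3} as a one-line perturbation of inequality \eqref{eql1}: isolate the part controlled by \eqref{eql1} and bound the extra constant term directly. First I apply \eqref{eql1} with $R=|z_k|=\alpha$. The hypotheses $|z|\le r<1$ and $\alpha>r$ are exactly those required there, so
\[
\left|\frac{z}{z-z_k}+\frac{r^2}{\alpha^2-r^2}\right|\le\frac{\alpha r}{\alpha^2-r^2}.
\]

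Next I use the triangle inequality to split off the additional term:
\[
\left|\frac{z}{z-z_k}+\frac{r^2}{\alpha^2-r^2}+\frac{r^2}{\beta^2-r^2}\right|
\le\left|\frac{z}{z-z_k}+\frac{r^2}{\alpha^2-r^2}\right|+\left|\frac{r^2}{\beta^2-r^2}\right|.
\]
Since $\beta>\alpha>r\ge 0$, we have $\beta^2-r^2>0$, so the last modulus is just $r^2/(\beta^2-r^2)$. Moreover,
\[
\frac{r^2}{\beta^2-r^2}\le\frac{\beta r}{\beta^2-r^2},
\]
because this is equivalent to $r\le\beta$, which holds. Combining this with the bound from the first step gives the stated inequality.

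I do not expect a genuine obstacle here. The only care needed is to check that every denominator is positive, which follows from $\beta>\alpha>r$, so that the moduli of the real constant terms can be dropped. Note also that the hypothesis $\beta>\alpha$ is stronger than necessary: the argument only uses $\beta>r$.
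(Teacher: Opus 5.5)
Your argument is correct and is essentially the paper's: the paper gives no separate proof, only the remark that the lemma is a direct consequence of \eqref{eql1}, and your application of \eqref{eql1} with $R=\alpha$, followed by the triangle inequality and the elementary bound $r^2/(\beta^2-r^2)\le \beta r/(\beta^2-r^2)$, is that consequence written out. Your side remark also holds: the argument uses only $\beta>r$, so the hypothesis $\beta>\alpha$ (and likewise $r<1$) is not needed.
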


\section{ \( S^*(\varphi) \)-radius and \( \mathcal{C}(\varphi) \)-radius of 4-Parameter Wright Function}

The general Wright function involving four parameters \{\cite{luchko98}, \cite{mehrez18}\} is defined as
\begin{equation}
\mathcal{W}_{(\mu,a),(\nu,b)}(z) = \sum_{k=0}^\infty \frac{z^k}{\Gamma(a + k\mu)\Gamma(b + k\nu)},
\qquad a, b \in \mathbb{C}, \quad \mu, \nu \in \mathbb{R}.
\end{equation}

For particular parameter choices such as $\mu, \nu > 0$, this series has been explored in detail by Wright. Various properties, including cases such as $b = \nu = 1$ and $-1 < \mu < 0$, have been investigated. When $\mu + \nu > 0$, it can be shown that the power series converges absolutely for any $z \in \mathbb{C}$. It is also established for $a, b \in \mathbb{C}$ and $0 < -\mu < \nu$ that $\mathcal{W}_{(\mu,a),(\nu,b)}(z)$ defines an entire function.

\vspace{1em}


Let $\phi_{(\mu,a),(\nu,b),n}$ be the n-th positive zero of derivative of function \[ \Psi_{(\mu,a),(\nu,b)}(z)=z^ {ab} \mathcal{W}_{(\mu,a),(\nu,b)}(-z^2), \] and let $\psi _{(\mu,a),(\nu,b),n}$ be the n-th zero of $\mathcal{W}_{(\mu,a),(\nu,b)}(-z^2)$ then the result (\cite{baricz18}, lemma 1) shows that for $\nu,\mu,a,b >0$ it can be written as
\begin{equation}\label{eq1}
    \Gamma(a)\Gamma(b) \mathcal{W}_{(\mu,a),(\nu,b)}(-z^2)= \prod_{n\geq 1}\left(1-\dfrac{z^2}{\psi_{(\mu,a),(\nu,b),n}^2} \right),
\end{equation}
  and all the respective zeroes will satisfy the property as
  \[
    \phi_{(\mu,a),(\nu,b),n} < \psi _{(\mu,a),(\nu,b),n} < \phi_{(\mu,a),(\nu,b),n+1} < \psi _{(\mu,a),(\nu,b),n+1}, \ \ (n \geq 1)
  \]
 Since, the function $\mathcal{W}_{(\mu,a),(\nu,b)}((-z^2))$ does not belong to class $\mathcal{A}$, so we require some normalization to it, let's define the functions
 \begin{equation}\label{eq2w}
\begin{cases}
    f_{(\mu,a),(\nu,b)}(z) = \left[ z^{ab} \Gamma(a)\Gamma(b) \mathcal{W}_{(\mu,a),(\nu,b)}(-z^2) \right]^{1/{ab}} \\[1.5ex]
    g_{(\mu,a),(\nu,b)}(z) = z \Gamma(a)\Gamma(b) \mathcal{W}_{(\mu,a),(\nu,b)}(-z^2) \\[1.5ex]
    h_{(\mu,a),(\nu,b)}(z) = z \Gamma(a)\Gamma(b) \mathcal{W}_{(\mu,a),(\nu,b)} (-z)
\end{cases}
\end{equation}
We will denote $\mathcal{W}_{(\mu,a),(\nu,b)}(-z^2)$ by $\mathfrak{W}_{(\mu,a),(\nu,b)}(z)$.
\begin{theorem}\label{th1w}
Let \( \mu, \nu, a, b \geq 0 \). The \( S^*(\varphi) \)-radius for the functions \( f_{(\mu,a),(\nu,b)} \), \( g_{(\mu,a),(\nu,b)} \), and \( h_{(\mu,a),(\nu,b)} \) given by \eqref{eq2w} are the least positive roots of the equations:
\begin{itemize}
    \item[(i)] \( r\, \mathfrak{W}'_{(\mu,a),(\nu,b)}(r) + ab \beta\mathfrak{W}_{(\mu,a),(\nu,b)}(r) = 0 \) with $ab \neq 0$.
    \item[(ii)] \( r\, {\mathfrak{W}'}_{(\mu,a),(\nu,b)}(r) + \beta\mathfrak{W}_{(\mu,a),(\nu,b)}(r) = 0 \)
    \item[(iii)] \( \sqrt{r}\, \mathfrak{W}'_{(\mu,a),(\nu,b)}(\sqrt{r}) + 2\beta\mathfrak{W}_{(\mu,a),(\nu,b)}(\sqrt{r}) = 0 \)
\end{itemize}
situated in  \( (0,\, \psi_{(\mu,a),(\nu,b), 1}) \), \( (0,\, \psi_{(\mu,a),(\nu,b), 1}) \), and  \( (0,\, \psi^2_{(\mu,a),(\nu,b), 1}) \) respectively,
where \( \varphi (-1) = 1- \beta \) and $\beta$ is the radius of the largest disk $\{ w: |w-1|<\beta \} \subseteq \varphi (\mathbb{D})$.
\end{theorem}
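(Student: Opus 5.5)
We use the Hadamard-type product \eqref{eq1} to write the logarithmic derivatives of the three normalized functions as sums of simple fractions. We then bound $\left|zF'(z)/F(z)-1\right|$ on $|z|\le r$ by its value at the negative real point $z=-r$ (equivalently, $\zeta=r$ after the $\pm$ symmetry), and compare with the disk $\{w:|w-1|<\beta\}\subseteq\varphi(\mathbb{D})$.

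Write $\psi_n=\psi_{(\mu,a),(\nu,b),n}$. Logarithmic differentiation of \eqref{eq1} gives
\[
\frac{z\mathfrak{W}'_{(\mu,a),(\nu,b)}(z)}{\mathfrak{W}_{(\mu,a),(\nu,b)}(z)}=-\sum_{n\ge1}\frac{2z^2}{\psi_n^2-z^2}.
\]
From \eqref{eq2w} this yields
\[
\frac{zf'(z)}{f(z)}=1+\frac{1}{ab}\frac{z\mathfrak{W}'(z)}{\mathfrak{W}(z)},\qquad
\frac{zg'(z)}{g(z)}=1+\frac{z\mathfrak{W}'(z)}{\mathfrak{W}(z)},\qquad
\frac{zh'(z)}{h(z)}=1-\sum_{n\ge1}\frac{z}{\psi_n^2-z}.
\]
The last formula comes from $h(z)=z\prod(1-z/\psi_n^2)$. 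For $|z|\le r<\psi_1$, Lemma \ref{lem1} (the inequality $|z/(b-z)|\le r/(b-r)$ with $b=\psi_n^2$, applied to $z^2$ or to $z$) gives
\[
\left|\frac{zf'(z)}{f(z)}-1\right|\le\frac1{ab}\sum_{n}\frac{2r^2}{\psi_n^2-r^2}=-\frac{1}{ab}\frac{r\mathfrak{W}'(r)}{\mathfrak{W}(r)},
\]
and analogously for $g$. For $h$ the corresponding bound is $\sum_n r/(\psi_n^2-r)=-\tfrac12\sqrt r\,\mathfrak{W}'(\sqrt r)/\mathfrak{W}(\sqrt r)$ for $r<\psi_1^2$. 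Equality holds at $z=r$ on the real axis, where the quantity $zF'/F$ is real and less than $1$.

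Next, set $\Phi_f(r)=-\tfrac{1}{ab}r\mathfrak{W}'(r)/\mathfrak{W}(r)$, and define $\Phi_g,\Phi_h$ similarly. Each is a sum of terms increasing in $r$, starts at $0$ at $r=0$, and tends to $+\infty$ as $r$ approaches $\psi_1$ (respectively $\psi_1^2$). Hence the equation $\Phi(r)=\beta$ has a unique root $r_0$ in the stated interval. Clearing the denominator $\mathfrak{W}$, which is positive there, gives exactly equations (i), (ii), (iii).

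For $r<r_0$ the estimate places $zF'(z)/F(z)$ in $\{|w-1|<\beta\}\subseteq\varphi(\mathbb{D})$, so $r^*_\varphi\ge r_0$. For sharpness, at $z=r_0$ we get $zF'/F=1-\beta=\varphi(-1)$. Since $\varphi$ is univalent and $\varphi(\mathbb D)$ is symmetric about the real axis, $\varphi(-1)$ lies on $\partial\varphi(\mathbb{D})$. For $r$ slightly larger than $r_0$ the real value $1-\Phi(r)<1-\beta$ falls outside $\varphi(\mathbb D)$, because $\varphi(\mathbb D)\cap\mathbb R=(\varphi(-1),\varphi(1))$. This shows $r^*_\varphi=r_0$.

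The main obstacle is this sharpness step. It needs $\varphi(-1)=1-\beta$ to be the leftmost real boundary point, and the largest disk centered at $1$ to touch $\partial\varphi(\mathbb D)$ there; that is exactly how the hypothesis on $\beta$ is phrased, so we will use it directly. Two minor checks remain. First, the $ab$-th root in $f$ must be single-valued on $\mathbb{D}_{\psi_1}$; this holds because $\mathfrak{W}$ is zero-free there, and we take the principal branch so that $f\in\mathcal A$. Second, the case $ab=0$ is excluded in (i).
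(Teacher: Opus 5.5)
Your proof is correct and follows the same route as the paper: the product expansion \eqref{eq1} for the logarithmic derivatives, the termwise bound from Lemma~\ref{lem1}, a strictly increasing auxiliary function with a unique root in the stated interval, and sharpness at the real point where the quantity equals $1-\beta=\varphi(-1)$. Your sharpness argument, via $\varphi(\mathbb{D})\cap\mathbb{R}=(\varphi(-1),\varphi(1))$, is more careful than the paper's; the one slip is in your overview, since for $h$ the extremal point is $z=r$ rather than $z=-r$ (both work for $f$ and $g$, which depend on $z^2$), and your later argument correctly uses $z=r$.
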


\begin{proof}
  By logarithmic differentiation of \eqref{eq2w} we get that
  \[
\left\{
\begin{aligned}
    &\frac{z f'_{(\mu,a),(\nu,b)}(z)}{f_{(\mu,a),(\nu,b)}(z)} = 1 + \frac{1}{ab} \frac{z \mathfrak{W}'_{(\mu,a),(\nu,b)}(z)}{\mathfrak{W}_{(\mu,a),(\nu,b)}(z)} = 1 - \frac{1}{ab} \sum_{n \geq 1} \frac{2z^2}{\psi^2_{(\mu,a),(\nu,b), n} - z^2} \\[2ex]
    &\frac{z g_{(\mu,a),(\nu,b)}'(z)}{g_{(\mu,a),(\nu,b)}(z)} = 1 + \frac{z \mathfrak{W}'_{(\mu,a),(\nu,b)}(z)}{\mathfrak{W}_{(\mu,a),(\nu,b)}(z)} = 1 - \sum_{n \geq 1} \frac{2z^2}{\psi^2_{(\mu,a),(\nu,b), n} - z^2} \\[2ex]
    &\frac{z h'_{(\mu,a),(\nu,b)}(z)}{h_{(\mu,a),(\nu,b)}(z)} = 1 + \frac{1}{2} \frac{\sqrt{z} \mathfrak{W}'_{(\mu,a),(\nu,b)}(\sqrt{z})}{\mathfrak{W}_{(\mu,a),(\nu,b)}(\sqrt{z})} = 1 - \sum_{n \geq 1} \frac{z}{\psi^2_{(\mu,a),(\nu,b), n} - z}
\end{aligned}
\right.
\]
 Taking \[ \frac{z f'_{(\mu,a),(\nu,b)}(z)}{f_{(\mu,a),(\nu,b)}(z)} = 1 + \frac{1}{ab} \frac{z \mathfrak{W}'_{(\mu,a),(\nu,b)}(z)}{\mathfrak{W}_{(\mu,a),(\nu,b)}(z)} = 1 - \frac{1}{ab} \sum_{n \geq 1} \frac{2z^2}{\psi^2_{(\mu,a),(\nu,b), n} - z^2} \] with $ ab > 0 $. Consider the continuous function given by \( K^{f_{(\mu,a),(\nu,b)}}: (0,\psi_{(\mu,a),(\nu,b),1}) \rightarrow \mathbb{R} \) defined as
 \begin{equation}\label{eq3w}
   K^{f_{(\mu,a),(\nu,b)}} (r) = \frac{1}{ab} \sum_{n \geq 1} \frac{2r^2}{\psi^2_{(\mu,a),(\nu,b), n} - r^2} - \beta
 \end{equation}
 then \ \[ \dfrac{d}{dr}K^{f_{(\mu,a),(\nu,b)}} (r) = \frac{1}{ab} \sum_{n \geq 1} \frac{4r^2 \psi^2_{(\mu,a),(\nu,b), n}}{(\psi^2_{(\mu,a),(\nu,b), n} - r^2)^2} > 0, \  \text{ for all } ab>0 \] and for \( r<\psi_{(\mu,a),(\nu,b),1} \). \\
 Also, \( K^{f_{(\mu,a),(\nu,b)}} (0)= -\beta < 0 \) and \( \lim_{r\rightarrow \psi_{(\mu,a),(\nu,b),1}} K^{f_{(\mu,a),(\nu,b)}} (r)= \infty \). Thus there exist a unique positive root  say $\rho_{\varphi}(f_{(\mu,a),(\nu,b)})$ of $K^{f_{(\mu,a),(\nu,b)}} (r)=0$ in $(0,\psi_{(\mu,a),(\nu,b),1})$. \\
 Now, let $\{ w: |w-1|<\beta \} \subseteq \varphi (\mathbb{D})$ be such that $\varphi (-1)=1-\beta$ therefore, it is known that (lemma \ref{lem1}) if $z \in \mathbb{C}$ and $\delta \in \mathbb{R}$ such that $|z|<r<\delta$ then
 \[
 Re \left( \dfrac{z}{\delta - z}\right) \leq \left| \dfrac{z}{\delta - z}\right| \leq \dfrac{|z|}{\delta - |z|}
 \]
 so, the inequality
 \begin{equation}\label{eq4w}
 \left| \dfrac{z f^{'}_{(\mu,a),(\nu,b)}(z)}{f_{(\mu,a),(\nu,b)} (z)} -1 \right|
 = \left| \frac{1}{ab} \sum_{n \geq 1} \dfrac{2z^2}{\psi^2_{(\mu,a),(\nu,b),n}-z^2} \right|
 = \frac{1}{ab} \sum_{n \geq 1} \dfrac{2r^2}{\psi^2_{(\mu,a),(\nu,b),n}-r^2} \subseteq \beta
 \end{equation}
 implies that $f_{(\mu,a),(\nu,b)} \in S^{*}(\varphi)$ in $|z|<\rho_{\varphi}(f_{(\mu,a),(\nu,b)})$.\\
 For sharpness, take $|z| \in \mathbb{D}$ such that $z=r=-\rho_{\varphi}(f_{(\mu,a),(\nu,b)})$ then using \eqref{eq3w} and \eqref{eq4w}, we get that
 \[
 \left| \dfrac{z f^{'}_{(\mu,a),(\nu,b)}(z)}{f_{(\mu,a),(\nu,b)} (z)} -1 = \beta \right|
 \]
 which means that \( \dfrac{z f^{'}_{(\mu,a),(\nu,b)}(z)}{f_{(\mu,a),(\nu,b)} (z)} \notin \varphi(\mathbb{D}), \ \forall |z| \geq \rho_{\varphi}(f_{(\mu,a),(\nu,b)}) \), it proves the sharpness part.\\
 Same reasoning will work for other two functions, that can be easily computed.
\end{proof}

\begin{theorem}\label{th2w}
 Let \( \mu, \nu, a, b \geq 0 \). The \( C(\varphi)-radius\) of the functions $f_{(\mu,a),(\nu,b)}$, $g_{(\mu,a),(\nu,b)}$, and $h_{(\mu,a),(\nu,b)}$ is given by the positive root of the equations:

\begin{itemize}
\item[(i)]  $r f''_{(\mu,a),(\nu,b)}(r) + \beta ab f'_{(\mu,a),(\nu,b)}(r) = 0 \quad \text{where } ab >0 $
\item[(ii)]  $r g''_{(\mu,a),(\nu,b)}(r) + \beta g'_{(\mu,a),(\nu,b)}(r) = 0 \quad \text{where } ab \geq 0$
\item[(iii)]  $r h''_{(\mu,a),(\nu,b)}(\sqrt{r}) + \beta h'_{(\mu,a),(\nu,b)}(\sqrt{r}) = 0 \quad \text{where } ab \geq 0$
\end{itemize}

situated in $(0, \psi_{(\mu,a),(\nu,b),1}), (0, \psi_{(\mu,a),(\nu,b),1}) \text{ and } (0, \psi^{2}_{(\mu,a),(\nu,b),1})$ respectively,
where $\varphi(-1)=1-\beta$ and $\beta$ is the radius of the largest disk $\{w:|w-1|<\beta \} \subseteq \varphi(\mathbb{D})$.
\end{theorem}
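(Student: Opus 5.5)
We follow the same scheme as in the proof of Theorem \ref{th1w}, with the zeros $\psi_{(\mu,a),(\nu,b),n}$ of $\mathfrak{W}_{(\mu,a),(\nu,b)}$ replaced by the zeros of the derivatives $f'$, $g'$, $h'$. The first step is to obtain Hadamard-type product representations for these derivatives. By \eqref{eq2w}, $f_{(\mu,a),(\nu,b)}(z)=z\,[\Gamma(a)\Gamma(b)\mathfrak{W}_{(\mu,a),(\nu,b)}(z)]^{1/(ab)}$. Hence
\[
f'_{(\mu,a),(\nu,b)}(z)=\frac{1}{ab}\,z^{1-ab}\,\bigl[\Gamma(a)\Gamma(b)\mathfrak{W}_{(\mu,a),(\nu,b)}(z)\bigr]^{\frac{1}{ab}-1}\,\Gamma(a)\Gamma(b)\,\Psi'_{(\mu,a),(\nu,b)}(z),
\]
where $\Psi_{(\mu,a),(\nu,b)}(z)=z^{ab}\mathfrak{W}_{(\mu,a),(\nu,b)}(z)$. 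The positive zeros of $\Psi'$ are the $\phi_{(\mu,a),(\nu,b),n}$. Since $\Psi'$ is $z^{ab-1}$ times an even entire function of growth order less than $2$, we can write $\Psi'_{(\mu,a),(\nu,b)}(z)=ab\,z^{ab-1}\prod_{n\ge1}(1-z^2/\phi^2_{(\mu,a),(\nu,b),n})$. We use the same Hadamard argument as in (\cite{baricz18}, Lemma 1) to justify this, and the interlacing $\phi_n<\psi_n<\phi_{n+1}$ to place the zeros. Logarithmic differentiation then gives
\[
1+\frac{zf''_{(\mu,a),(\nu,b)}(z)}{f'_{(\mu,a),(\nu,b)}(z)}=1-\Bigl(\frac1{ab}-1\Bigr)\sum_{n\ge1}\frac{2z^2}{\psi_n^2-z^2}-\sum_{n\ge1}\frac{2z^2}{\phi_n^2-z^2}.
\]
For $g$ and $h$ we argue in the same way: $g'$ and $z\mapsto h'(z)$ are real entire functions of order less than $2$ (respectively less than $1$ in $z$) with only real zeros $\gamma_n$ (respectively $\delta_n$). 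These zeros interlace with $\psi_n$ (respectively $\psi_n^2$), by a Laguerre–Pólya / Rolle-type argument applied to $z\mathfrak W(z)$. This yields
\[
1+\frac{zg''}{g'}=1-\sum_n\frac{2z^2}{\gamma_n^2-z^2},\qquad 1+\frac{zh''}{h'}=1-\sum_n\frac{z}{\delta_n-z}.
\]

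Next, for each function we define a real auxiliary function, in parallel with \eqref{eq3w}. For $g$ we set $K^{g}(r)=\sum_n\frac{2r^2}{\gamma_n^2-r^2}-\beta$, and for $h$ the analogue with $\frac{r}{\delta_n-r}$. Each is strictly increasing, equals $-\beta$ at $0$, and tends to $+\infty$ at the first zero. Since the first zero lies below $\psi_1$ (respectively $\psi_1^2$), there is a unique root in the stated interval. Rewriting $K(r)=0$ as $1+rF''(r)/F'(r)=1-\beta$ gives exactly the equations (ii) and (iii). For $f$, equation (i) arises in the same way after multiplying by $ab$.

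Then we verify the inclusion. For $|z|\le r$, Lemma \ref{lem1} gives $\bigl|\frac{z^2}{\gamma_n^2-z^2}\bigr|\le \frac{r^2}{\gamma_n^2-r^2}$. Consequently $|1+zg''/g'-1|\le\beta$ for $|z|<r_0$, so the value lies in $\{|w-1|<\beta\}\subseteq\varphi(\mathbb D)$. Sharpness follows from evaluating at $z=r_0$: there the expression equals $1-\beta=\varphi(-1)\in\partial\varphi(\mathbb D)$.

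\textbf{Main obstacle.} The hardest case is $f$ when $ab\ne1$, because the expression then mixes the two families of zeros with coefficient $\frac1{ab}-1$, which may be negative. When $0<ab\le1$, both coefficients are nonnegative, so the triangle inequality and Lemma \ref{lem1} apply term by term. When $ab>1$, we would pair the terms $\frac{z^2}{\phi_n^2-z^2}-\lambda\frac{z^2}{\psi_n^2-z^2}$ with $\lambda=1-\frac1{ab}\in[0,1]$, applied to the variable $z^2$, and use $\phi_n<\psi_n$. The first part of Lemma \ref{lem1}, with $b=\phi_n^2<a=\psi_n^2$, then bounds each pair by its value at $r$. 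This gives the modulus bound by $K^{f}(r)+\beta$, which is also increasing, since each paired term is increasing in $r$ by the same lemma. Establishing the product formula for $\Psi'$, that is, reality of its zeros and the growth order, is the other delicate point; for this we rely on the interlacing quoted from \cite{baricz18}.
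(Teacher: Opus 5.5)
Your argument is the paper's argument: a Hadamard product for the derivative (zeros $\phi_n$ of $\Psi'$, interlacing with $\psi_n$), logarithmic differentiation, Lemma \ref{lem1} to bound $|zf''/f'|$ on $|z|=r$ by its value at the real point, the first zero of the increasing majorant minus $\beta$, and sharpness at the real point where $1+zf''/f'=1-\beta=\varphi(-1)$. Your handling of $ab>1$ is the justification the paper leaves out when it only says the bound ``also holds for $ab>1$''. You pair $\frac{z^2}{\phi_n^2-z^2}-\lambda\frac{z^2}{\psi_n^2-z^2}$ with $\lambda=1-\frac{1}{ab}$, and apply the first inequality of Lemma \ref{lem1} in the variable $z^2$ using $\phi_n<\psi_n$.

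The step that fails is your claim that (i) ``arises in the same way after multiplying by $ab$''. Your criterion, and the paper's, is $1+rf''(r)/f'(r)=1-\beta$, i.e. $rf''(r)+\beta f'(r)=0$. Unlike Theorem \ref{th1w}(i), where $\frac{1}{ab}$ multiplies $r\mathfrak{W}'/\mathfrak{W}$, there is no factor $\frac{1}{ab}$ to clear here. At the true radius $r_0\in(0,\phi_1)$ one has $f'(r_0)\neq 0$, hence $r_0f''(r_0)+ab\beta f'(r_0)=(ab-1)\beta f'(r_0)\neq 0$ unless $ab=1$. So (i) as printed is a misprint and cannot be proved. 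The paper's own proof in fact defines $\rho^c_\varphi(f)$ as the smallest root of $rf''(r)/f'(r)+\beta=0$. Likewise, your argument for $h$ gives $rh''(r)+\beta h'(r)=0$, not the printed $rh''(\sqrt{r})+\beta h'(\sqrt{r})=0$; the $\sqrt{r}$ is carried over from Theorem \ref{th1w}(iii). You should state the corrected equations rather than assert agreement with the printed ones.

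Two smaller points:
\begin{enumerate}
\item Your auxiliary functions have a unique zero only on $(0,\phi_1)$, $(0,\gamma_1)$, $(0,\delta_1)$, not on the whole stated intervals. For instance, when $ab<1$, $1+rf''(r)/f'(r)$ runs from $+\infty$ to $-\infty$ on $(\phi_1,\psi_1)$ and hits $1-\beta$ again. The radius is therefore the smallest positive root, as the paper says.
\item For membership in the open disc you need the strict bound $|zf''/f'|<\beta$ for $|z|<r_0$, not the $\le\beta$ you wrote. This follows from the strict monotonicity of the majorant.
\end{enumerate}
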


\begin{proof}
  From the functions defined in \eqref{eq2w} we have
\[
\frac{z f''_{(\mu,a),(\nu,b)}(z)}{f'_{(\mu,a),(\nu,b)}(z)} = 1 + \frac{z \mathcal{W}''_{(\mu,a),(\nu,b)}(z)}{\mathcal{W}'_{(\mu,a),(\nu,b)}(z)}+ (\frac{1}{ab}-1) \frac{z \mathcal{W}'_{(\mu,a),(\nu,b)}(z)}{\mathcal{W}_{(\mu,a),(\nu,b)}(z)} \]
\[
= 1 - \sum_{n \geq 1} \frac{2z^2}{\tilde{\psi}^2_{(\mu,a),(\nu,b), n} - z^2} - (\frac{1}{ab}-1) \sum_{n \geq 1} \frac{2z^2}{\psi^2_{(\mu,a),(\nu,b), n} - z^2}
\]
Using lemma \ref{lem1} we get that
\[
Re \left( 1+ \frac{z f''_{(\mu,a),(\nu,b)}(z)}{f'_{(\mu,a),(\nu,b)}(z)} \right) \leq \left| 1+ \frac{z f''_{(\mu,a),(\nu,b)}(z)}{f'_{(\mu,a),(\nu,b)}(z)} \right|
\]
implies that
\[
Re \left( \frac{z f''_{(\mu,a),(\nu,b)}(z)}{f'_{(\mu,a),(\nu,b)}(z)} \right) \leq \left| \frac{z f''_{(\mu,a),(\nu,b)}(z)}{f'_{(\mu,a),(\nu,b)}(z)} \right|
 \leq \]
  \[
  \sum_{n \geq 1} \frac{2z^2}{\tilde{\psi}^2_{(\mu,a),(\nu,b), n} - z^2} + (\frac{1}{ab}-1) \sum_{n \geq 1} \frac{2z^2}{\psi^2_{(\mu,a),(\nu,b), n} - z^2} = \frac{- r f''_{(\mu,a),(\nu,b)}(r)}{f'_{(\mu,a),(\nu,b)}(r)}
\]
holds in $|z| = r < \tilde{\psi}_{(\mu,a),(\nu,b), 1}$ for $ab \leq 1$.\\
Observe the inequality \[ \left| \frac{z f''_{(\mu,a),(\nu,b)}(z)}{f'_{(\mu,a),(\nu,b)}(z)} \right| \leq \frac{-r f''_{(\mu,a),(\nu,b)}(r)}{f'_{(\mu,a),(\nu,b)}(r)} \] also holds for $ab > 1$ in $r < \tilde{\psi}_{(\mu,a),(\nu,b), 1}$. Consider $\rho^{c}_{\phi}(f_{(\mu,a),(\nu,b)})$ to be the smallest positive root of the equation $$\frac{r f''_{(\mu,a),(\nu,b)}(r)}{f'_{(\mu,a),(\nu,b)}(r)}+\beta=0$$.\\
Now assume that \(\{ w: |w-1|<\beta \} \subseteq \varphi (\mathbb{D})\) be such that $\varphi(-1)=1-\beta$ then it follows that \[ \left| \frac{z f''_{(\mu,a),(\nu,b)}(z)}{f'_{(\mu,a),(\nu,b)}(z)} \right| \leq \frac{-r f''_{(\mu,a),(\nu,b)}(r)}{f'_{(\mu,a),(\nu,b)}(r)} \leq \beta \] holds for $|z|=r<\rho^{c}_{\phi}(f_{(\mu,a),(\nu,b)})$ for $ab > 0$ thus $f_{(\mu,a),(\nu,b)} \in C(\phi)$. \\
Now take $z=-\rho^{c}_{\phi}(f_{(\mu,a),(\nu,b)})$ then \[ \left| \frac{z f''_{(\mu,a),(\nu,b)}(z)}{f'_{(\mu,a),(\nu,b)}(z)} \right| = \frac{-r f''_{(\mu,a),(\nu,b)}(r)}{f'_{(\mu,a),(\nu,b)}(r)} = \beta \] implies that
$$1+ \frac{z f''_{(\mu,a),(\nu,b)}(z)}{f'_{(\mu,a),(\nu,b)}(z)} \notin \varphi (\mathbb{D}),\ \forall |z|=r \geq \rho^{c}_{\phi}(f_{(\mu,a),(\nu,b)})$$ which proves the sharpness part.
Other two parts can be proved similarly.
\end{proof}
\section{Exponential Radii of starlikeness and convexity of 4-parameter Wright Function}
\begin{theorem}
Let $\nu,\mu,a,b$ be positive real constants, and let $\psi_{(\mu,a), (\nu,b), 1}$ represent the first positive zero of the function $\mathfrak{W}_{(\mu,a), (\nu,b)}$.

\begin{enumerate}
    \item[(a)] For the function $f_{(\mu,a), (\nu,b)}$, the exponential starlikeness radius is given by $\mathcal{R}_e^*(f_{(\mu,a), (\nu,b)}) = t_{(\mu,a), (\nu,b), 1}$. Here, $t_{(\mu,a), (\nu,b), 1}$ is defined as the smallest positive value in the interval $(0, \psi_{(\mu,a), (\nu,b), 1})$ that satisfies:
    \[
    \frac{r \mathfrak{W}_{(\mu,a), (\nu,b)}'(r)}{\mathfrak{W}_{(\mu,a), (\nu,b)}(r)} + ab \left( 1 - \frac{1}{e} \right) = 0.
    \]

    \item[(b)] For the function $g_{(\mu,a), (\nu,b)}$, the exponential starlikeness radius is $\mathcal{R}_e^*(g_{(\mu,a), (\nu,b)}) = b1_{(\mu,a), (\nu,b), 1}$, where $b1_{(\mu,a), (\nu,b), 1} \in (0, \psi_{\rho, \beta, 1})$ corresponds to the first positive root of:
    \[
    \frac{r \mathfrak{W}_{(\mu,a), (\nu,b)}'(r)}{\mathfrak{W}_{(\mu,a), (\nu,b)}(r)} + 1 - \frac{1}{e} = 0.
    \]

    \item[(c)] The exponential starlikeness radius for $h_{(\mu,a), (\nu,b)}$ is denoted by $\mathcal{R}_e^*(h_{(\mu,a), (\nu,b)}) = c1_{(\mu,a), (\nu,b), 1}$. This value $c1_{(\mu,a), (\nu,b), 1}$ is the minimal positive root within $(0, \psi_{(\mu,a), (\nu,b), 1}^2)$ of the following equation:
    \[
    \frac{\sqrt{r} \mathfrak{W}_{(\mu,a), (\nu,b)}'(\sqrt{r})}{\mathfrak{W}_{(\mu,a), (\nu,b)}(\sqrt{r})} + 2 \left( 1 - \frac{1}{e} \right) = 0.
    \]
\end{enumerate}
\end{theorem}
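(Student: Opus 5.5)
The plan is to reduce the exponential starlikeness problem to a disk-containment statement, and then reuse the monotonicity argument from Theorem \ref{th1w}. Write $p(z)=zF'(z)/F(z)$ for $F\in\{f,g,h\}$; the index $(\mu,a),(\nu,b)$ is suppressed below. By the logarithmic derivatives computed in the proof of Theorem \ref{th1w}, which come from the Hadamard product \eqref{eq1}, we have
\[
p_f(z)=1-\frac{1}{ab}\sum_{n\ge1}\frac{2z^2}{\psi_n^2-z^2},\qquad p_g(z)=1-\sum_{n\ge1}\frac{2z^2}{\psi_n^2-z^2},\qquad p_h(z)=1-\sum_{n\ge1}\frac{z}{\psi_n^2-z}.
\]
The condition $|\log p(z)|<1$ says that $p(z)$ lies in $\Omega_e=\{w:|\log w|<1\}$. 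By Lemma \ref{lem2} with $a=1$, the disk $\{w:|w-1|<1-1/e\}$ lies in $\Omega_e$. This is the case $\frac12(1+\frac1e)\le 1\le\frac12(e+\frac1e)$, so $r_1=1-1/e$. It therefore suffices to show $|p(z)-1|<1-1/e$. This is exactly the setting of Theorem \ref{th1w} with $\beta=1-1/e$, and $\exp(-1)=1-\beta$ plays the role of $\varphi(-1)$.

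\textbf{Sufficiency.} For $|z|\le r<\psi_1$, Lemma \ref{lem1} gives $\left|\frac{z^2}{\psi_n^2-z^2}\right|\le\frac{r^2}{\psi_n^2-r^2}$, since $|z^2|\le r^2<\psi_n^2$. Summing over $n$ yields
\[
|p_f(z)-1|\le\frac{1}{ab}\sum_{n\ge1}\frac{2r^2}{\psi_n^2-r^2}=-\frac{1}{ab}\frac{r\mathfrak{W}'(r)}{\mathfrak{W}(r)}.
\]
Define $K(r)=-\frac{1}{ab}\frac{r\mathfrak{W}'(r)}{\mathfrak{W}(r)}-(1-\frac1e)$ on $(0,\psi_1)$, as in \eqref{eq3w}. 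Termwise differentiation shows $K$ is strictly increasing. We also have $K(0^+)=-(1-1/e)<0$ and $K(r)\to\infty$ as $r\to\psi_1^-$. Hence $K$ has a unique zero $t_1$ in $(0,\psi_1)$, which is precisely the root of the equation in (a). For $r<t_1$ we get $|p_f(z)-1|<1-1/e$, so $|\log p_f(z)|<1$.

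The same computation handles $g$, with the factor $1/ab$ removed. It also handles $h$ with $z$ in place of $z^2$ on $|z|<\psi_1^2$. There the relevant function is $-\frac12\frac{\sqrt r\mathfrak{W}'(\sqrt r)}{\mathfrak{W}(\sqrt r)}$, and setting it equal to $1-1/e$ gives equation (c) with its factor $2$.

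\textbf{Sharpness.} This is the step needing care, because the containment disk is not all of $\Omega_e$; it only works because $1-1/e$ is the distance from $1$ to the boundary point $1/e$. Take $z$ real with $z^2=t_1^2$, i.e. $z=\pm t_1$ for $f,g$, and $z=c_1$ for $h$. At such a point every term of the sum is real and positive, so
\[
p_f(t_1)=1-\frac{1}{ab}\sum_{n\ge1}\frac{2t_1^2}{\psi_n^2-t_1^2}=1-\left(1-\frac1e\right)=\frac1e .
\]
Then $|\log p_f(t_1)|=|\log(1/e)|=1$, so the inequality fails on $|z|=t_1$. By continuity and monotonicity it fails for every slightly larger radius, so $t_1$ cannot be improved. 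The identical evaluation gives sharpness for $g$ at $b1_1$ and for $h$ at $c1_1$.

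The main obstacle I anticipate is justifying the use of Lemma \ref{lem1} and the product \eqref{eq1}, which need $\psi_n$ real and positive for $\mu,\nu,a,b>0$ as stated. One must also check that the equation $\frac{r\mathfrak{W}'}{\mathfrak{W}}+ab(1-\frac1e)=0$ matches $K(r)=0$ with the correct sign. Indeed $\frac{r\mathfrak{W}'(r)}{\mathfrak{W}(r)}=-\sum_n\frac{2r^2}{\psi_n^2-r^2}$, so the two are equivalent, and it is this sign that makes the root lie in $(0,\psi_1)$.
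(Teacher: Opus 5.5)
Your proof is correct, but it takes a different and shorter route than the paper.

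The paper does not bound $|p_f(z)-1|$. Instead it applies the sharper inequality \eqref{eql1} termwise to get the disk \eqref{eq3a}, centered at $F_f(r)=1-\frac{2}{ab}\sum_n\frac{r^4}{\psi_n^4-r^4}=\frac12\bigl(p_f(r)+p_f(ir)\bigr)$ with radius $\frac{2}{ab}\sum_n\frac{\psi_n^2r^2}{\psi_n^4-r^4}$. It then applies Lemma \ref{lem2} with the moving center $a=F_f(r)$. This forces it to introduce auxiliary radii $t_2$ (where $F_f=\frac12(1+\frac1e)$) and $t_3$ (where $F_f=0$). It must also show $t_1\le t_2$, so that the center stays in the range where $r_a=a-\frac1e$.

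Your coarser disk loses nothing for the following reason. The leftmost point of the paper's disk, $F_f(r)-\frac{2}{ab}\sum_n\frac{\psi_n^2r^2}{\psi_n^4-r^4}$, equals $p_f(r)$. This is also the leftmost point of your disk $|w-1|\le 1-p_f(r)$. In both cases the only binding constraint from $\Omega_e$ is the boundary point $1/e$. So both arguments reduce to $p_f(r)\ge 1/e$ and produce the same $t_1$, with the same extremal point $z=t_1$.

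Your approach buys brevity and shows the theorem is just the case $\varphi(z)=e^z$, $\beta=1-\frac1e$ of Theorem \ref{th1w}. The paper's approach is the standard template from the literature. Its smaller disk would only make a difference for target regions where the disk centered at $1$ of radius $1-p(r)$ is not already sharp, i.e.\ where the obstruction does not come from the real point $p(r)$.
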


\begin{proof}
For proving the results, we need to show that
\[
\left| \log \left( \frac{z f'_{(\mu,a), (\nu,b)}(z)}{f_{(\mu,a), (\nu,b)}(z)} \right) \right| < 1,
\qquad
\left| \log \left( \frac{z g'_{(\mu,a), (\nu,b)}(z)}{g_{(\mu,a), (\nu,b)}(z)} \right) \right| < 1
\quad \text{and} \quad
\left| \log \left( \frac{z h'_{(\mu,a), (\nu,b)}(z)}{h_{(\mu,a), (\nu,b)}(z)} \right) \right| < 1
\]
respectively for all $z$ in disks of some radius and these inequalities do not hold true
in any bigger disk other than that. Since, we know that
\[
\frac{z f'_{(\mu,a), (\nu,b)}(z)}{f_{(\mu,a), (\nu,b)}(z)}
=1+\frac{1}{ab}\frac{z\mathfrak{W}'_{(\mu,a), (\nu,b)}(z)}{\mathfrak{W}_{(\mu,a), (\nu,b)}(z)}
=1-\frac{2}{ab}\sum_{n=1}^{\infty}\frac{z^{2}}{\psi_{\rho,\beta,n}^{2}-z^{2}} .
\]

Using Lemma \ref{lem1}, we get the following inequality
\begin{equation}\label{eq3a}
\left|
\frac{z f'_{(\mu,a), (\nu,b)}(z)}{f_{(\mu,a), (\nu,b)}(z)}
-1+\frac{2}{ab}\sum_{n=1}^{\infty}\frac{r^{4}}{\psi_{(\mu,a), (\nu,b),n}^{4}-r^{4}}
\right|
\le
\frac{2}{ab}\sum_{n=1}^{\infty}\frac{\psi_{(\mu,a), (\nu,b),n}^{2}r^{2}}{\psi_{(\mu,a), (\nu,b),n}^{4}-r^{4}}
\tag{2.9}
\end{equation}
for all $|z|\le r<\psi_{(\mu,a), (\nu,b),1}$. Note that the equality holds in the above
equation when $z=ir$.
 Take $I_1:=(0,\psi_{(\mu,a), (\nu,b),1})$. Then the function $F_f:I_1\to\mathbb{R}$
defined by
\[
F_f(r)=1-\frac{2}{ab}\sum_{n=1}^{\infty}\frac{r^{4}}{\psi_{(\mu,a), (\nu,b), n}^{4}-r^{4}}
=\frac{1}{2}\left(
\frac{r f'_{(\mu,a), (\nu,b)}(r)}{f_{(\mu,a), (\nu,b)}(r)}
+
\frac{i r f'_{(\mu,a), (\nu,b)}(ir)}{f_{(\mu,a), (\nu,b)}(ir)}
\right).
\]
is continuous on $I_1$ also $F_f'(r)<0$ $\forall$ $r\in I_1$, thus $F_f$ is decreasing
on the interval $I_1$.\\
Clearly $F_f(0)=1>0$ and the function $F_f$ has the limit $-\infty$
when $r \nearrow \psi_{(\mu,a), (\nu,b),1}$. Implies that $F_f(a_{(\mu,a), (\nu,b),3})=0$
for some $t_{(\mu,a), (\nu,b),3}\in I_1$. Therefore by intermediate value theorem there
exists a unique real number $t_{(\mu,a), (\nu,b),2}$ in the interval $(0,t_{(\mu,a), (\nu,b),3})$
such that
\[
F_f(t_{(\mu,a), (\nu,b),2})=\frac{1}{2}\left(1+\frac{1}{e}\right).
\]
Note that if $0\le r\le t_{(\mu,a), (\nu,b),2}$, then
\[
\frac{1}{2}\left(1+\frac{1}{e}\right)\le F_f(r)\le 1.
\]
Now, define function $\xi_f$ as
\[
\xi_f(r):=\frac{1}{e}-\frac{r f'_{(\mu,a), (\nu,b)}(r)}{f_{(\mu,a), (\nu,b)}(r)}.
\]

Then the limit of the function $\xi_f$ is $\frac{1}{e}-1<0$ when $r\searrow0$
and $\infty$ when $r\nearrow\psi_{(\mu,a), (\nu,b),1}$.
Since $\psi_f'(r)>0$ $\forall$ $r\in I$ so $\xi_f$ is an increasing function of $r$. Therefore the function $\xi_f$ cuts the x-axis
exactly once in the interval $I_1$. By the computation
\[
\xi_f(t_{(\mu,a), (\nu,b),3})
=\frac{1}{e}-1+\frac{2}{ab}\sum_{n=1}^{\infty}
\frac{t_{(\mu,a), (\nu,b),3}^{2}}{\psi_{(\mu,a), (\nu,b),n}^{2}-t_{(\mu,a), (\nu,b),3}^{2}}
\]
\[
\ge
\frac{1}{e}-1+\frac{2}{ab}\sum_{n=1}^{\infty}
\frac{t_{(\mu,a), (\nu,b),3}^{4}}{\psi_{(\mu,a), (\nu,b),n}^{4}-t_{(\mu,a), (\nu,b),3}^{4}}
=
\frac{1}{e}F_f(t_{(\mu,a), (\nu,b),3})
=
\frac{1}{e}>0
\]
we sees that the function $\xi_f$ has a zero in the interval $(0,t_{(\mu,a), (\nu,b),3})$.
Let this zero be denoted by $t_{(\mu,a), (\nu,b),1}$. If $t_{(\mu,a), (\nu,b),1}\le t_{(\mu,a), (\nu,b),2}$,
then $\xi_f(r)\le0$ for all $r\le t_{(\mu,a), (\nu,b),1}$, that is
\[
\frac{2}{ab}\sum_{n=1}^{\infty}
\frac{\psi_{(\mu,a), (\nu,b),n}^{2}r^{2}}{\psi_{(\mu,a), (\nu,b),n}^{4}-r^{4}}
\le
F_f(r)-\frac{1}{e}
\]

for all $r\le t_{(\mu,a), (\nu,b),1}$. If $t_{(\mu,a), (\nu,b),2}<t_{(\mu,a), (\nu,b),1}<t_{(\mu,a), (\nu,b),3}$,
then
\[
0<F_f(r)<\frac{1}{2}\left(1+\frac{1}{e}\right)
\]
for $t_{(\mu,a), (\nu,b),2}<r<t_{(\mu,a), (\nu,b),1}$ and in this case, $1$ does not belong to the disk
\eqref{eq3a} which leads to a contradiction. Hence using Lemma 1, we see that the disk \eqref{eq3a}
lies inside the region $\Omega_e$ for all $r\le t_{(\mu,a), (\nu,b),1}$. This shows that
\[
R_e^*(f_{(\mu,a), (\nu,b)})\ge t_{(\mu,a), (\nu,b),1}.
\]
Also
\[
\left|
\log\left(
\frac{t_{(\mu,a), (\nu,b),1}\cdot f'_{(\mu,a), (\nu,b)}(t_{(\mu,a), (\nu,b),1})}
{f_{(\mu,a), (\nu,b)}(t_{(\mu,a), (\nu,b),1})}
\right)
\right|
=
\left|
\log\left(
\frac{1}{e}-\psi_f(t_{(\mu,a), (\nu,b),1})
\right)
\right|
=1
\]
proves that $R_e^*(f_{(\mu,a), (\nu,b)})=t_{(\mu,a), (\nu,b),1}$.\\

For part b, we know that
\[
\frac{z g'_{(\mu,a), (\nu,b)}(z)}{g_{(\mu,a), (\nu,b)}(z)}
=1+\frac{z\mathfrak{W}'_{(\mu,a), (\nu,b)}(z)}{\mathfrak{W}_{(\mu,a), (\nu,b)}(z)}
=1-\sum_{n=1}^{\infty}\frac{2z^{2}}{\psi_{(\mu,a), (\nu,b),n}^{2}-z^{2}}.
\]

Therefore from lemma \eqref{eql1},
\begin{equation}\label{eq3b}
\left|
\frac{z g'_{(\mu,a), (\nu,b)}(z)}{g_{(\mu,a), (\nu,b)}(z)}
-1+
\sum_{n=1}^{\infty}
\frac{2r^{4}}{\psi_{(\mu,a), (\nu,b),n}^{4}-r^{4}}
\right|
\le
\sum_{n=1}^{\infty}
\frac{2\psi_{(\mu,a), (\nu,b),n}^{2}r^{2}}
{\psi_{(\mu,a), (\nu,b),n}^{4}-r^{4}}
\tag{2.10}
\end{equation}

for all $|z|\le r<\psi_{(\mu,a), (\nu,b),1}$ with equality at $z=ir$. Taking $I_1:=(0,\psi_{(\mu,a), (\nu,b),1})$, it can be seen that the function
\[
F_g(r):=
1-\sum_{n=1}^{\infty}\frac{2r^{4}}{\psi_{(\mu,a), (\nu,b),n}^{4}-r^{4}}
=
\frac{1}{2}
\left(
\frac{r g'_{(\mu,a), (\nu,b)}(r)}{g_{(\mu,a), (\nu,b)}(r)}
+
\frac{i r g'_{(\mu,a), (\nu,b)}(ir)}{g_{(\mu,a), (\nu,b)}(ir)}
\right)
\]

is continuously decreasing on $I_1$. Following the similar way as part (a), we observe that $F_g(0)=1$ and $\lim_{r\nearrow\psi_{(\mu,a), (\nu,b),1}}F_g(r)=-\infty$.
This confirms the existence of a root of $C_g$ in the interval $I_1$, we denote it by $b1_{(\mu,a), (\nu,b),3}$, .
Again using intermediate value theorem we get a unique real number
$b1_{(\mu,a), (\nu,b),2}$ in the interval $(0,b1_{(\mu,a), (\nu,b),3})$ satisfying
\[
F_g(b1_{(\mu,a), (\nu,b),2})=\frac{1}{2}\left(1+\frac{1}{e}\right).
\]

Now for $0<r\le b1_{(\mu,a), (\nu,b),2}$, observe that
\[
\frac{1}{2}\left(1+\frac{1}{e}\right)\le F_g(r)\le 1.
\]

Consider a function defined by
\[
\xi_g(r):=\frac{1}{e}-\frac{r g'_{(\mu,a), (\nu,b)}(r)}{g_{(\mu,a), (\nu,b)}(r)}.
\]

Then $\lim_{r\searrow0}\xi_g(r)=\frac{1}{e}-1<0$ and
$\lim_{r\nearrow\psi_{(\mu,a), (\nu,b),1}}\xi_g(r)=+\infty$.
Also $\xi_g'(r)>0$ for all $r\in I_1$, this implies $\xi_g$ is an increasing
function of $r$. Therefore by intermediate value theorem, there exists a unique
root $b1_{(\mu,a), (\nu,b),1}$ of the function $\xi_g$ in the interval $I_1$, or more
precisely in the interval $(0,b1_{(\mu,a), (\nu,b),3})$ since

\[
\xi_g(b1_{(\mu,a), (\nu,b),3})
=
\frac{1}{e}-1+
\sum_{n=1}^{\infty}
\frac{2(b1)_{(\mu,a), (\nu,b),3}^{2}}
{\psi_{(\mu,a), (\nu,b),n}^{2}-{b1}^2_{(\mu,a), (\nu,b),3}}
\]

\[
\ge
\frac{1}{e}-1+
\sum_{n=1}^{\infty}
\frac{2(b1)_{(\mu,a), (\nu,b),3}^{4}}
{\psi_{(\mu,a), (\nu,b),n}^{4}-b1_{(\mu,a), (\nu,b),3}^{4}}
=
\frac{1}{e}-F_g(b1_{(\mu,a), (\nu,b),3})
=
\frac{1}{e}>0.
\]

This gives
\[
\sum_{n=1}^{\infty}
\frac{2\psi_{(\mu,a), (\nu,b),n}^{2}r^{2}}
{\psi_{(\mu,a), (\nu,b),n}^{4}-r^{4}}
\le
F_g(r)-\frac{1}{e}
\]

$\forall$ $r\le b1_{(\mu,a), (\nu,b),1}$. Observe that the zero $b1_{(\mu,a), (\nu,b),1}$ is always
smaller than $b1_{(\mu,a), (\nu,b),2}$, as if not, then $F_g(r)$ would
lie between $0$ and $\frac{1}{2}\left(1+\frac{1}{e}\right)$ which is not possible as
then the point $1$ would not belong to the disk \eqref{eq3b}. Hence Lemma 1 proves that
the disk \eqref{eq3b} is contained inside $\Omega_e$ for all $r\le b1_{(\mu,a), (\nu,b),1}$,
that is,
\[
R_e^*(g_{(\mu,a), (\nu,b)})\ge b1_{(\mu,a), (\nu,b),1}.
\]

Since
\[
\left|
\log
\left(
\frac{b1_{(\mu,a), (\nu,b),1}\cdot g'_{(\mu,a), (\nu,b)}(b1_{(\mu,a), (\nu,b),1})}
{g_{(\mu,a), (\nu,b)}(b1_{(\mu,a), (\nu,b),1})}
\right)
\right|
=1,
\]

therefore $R_e^*(g_{(\mu,a), (\nu,b)})=b1_{(\mu,a), (\nu,b),1}$.\\
 For part c, from the normalization and the decomposition, it is clear that
\[
\frac{z h'_{(\mu,a), (\nu,b)}(z)}{h_{(\mu,a), (\nu,b)}(z)}
=1+\frac{1}{2}\cdot \frac{\sqrt{z}\,\mathfrak{W}'_{(\mu,a), (\nu,b)}(\sqrt{z})}{\mathfrak{W}_{(\mu,a), (\nu,b)}(\sqrt{z})}
=1-\sum_{n=1}^{\infty}\frac{z}{\psi_{(\mu,a), (\nu,b),n}^{2}-z}.
\]

Using \eqref{eql1}, we have the inequality
\begin{equation}\label{eq3c}
\left|\frac{z h'_{(\mu,a), (\nu,b)}(z)}{h_{(\mu,a), (\nu,b)}(z)}-1+\sum_{n=1}^{\infty}\frac{r^{2}}{\psi_{(\mu,a), (\nu,b),n}^{4}-r^{2}}\right|
\le
\sum_{n=1}^{\infty}\frac{\psi_{(\mu,a), (\nu,b),n}^{2} r}{\psi_{(\mu,a), (\nu,b),n}^{4}-r^{2}}
\end{equation}

valid for all $|z|\le r<\psi_{(\mu,a), (\nu,b),1}^{2}$ with equality at $z=-r$ because of the minimum principle for the harmonic functions. Take $J:=(0,\psi_{(\mu,a), (\nu,b),1}^{2})$, then the function $L_h:J\to\mathbb{R}$ defined by
\[
L_h(r)=1-\sum_{n=1}^{\infty}\frac{r^{2}}{\psi_{(\mu,a), (\nu,b),n}^{4}-r^{2}}
=\frac{1}{2}\left(\frac{r h'_{(\mu,a), (\nu,b)}(r)}{h_{(\mu,a), (\nu,b)}(r)}-\frac{r h'_{(\mu,a), (\nu,b)}(-r)}{h_{(\mu,a), (\nu,b)}(-r)}\right)
\]

is continuously decreasing on the interval $J$. Note that $L_h(0)=1$ and the function $C_h$ takes the limit $-\infty$ when $r\uparrow\psi_{(\mu,a), (\nu,b),1}^{2}$ implying that $L_h(l_{(\mu,a), (\nu,b),3})=0$ for some $l_{(\mu,a), (\nu,b),3}\in J$. By intermediate value theorem, we have $L_h(l_{(\mu,a), (\nu,b),2})=\frac12\left(1+\frac1e\right)$ for some $l_{(\mu,a), (\nu,b),2}\in(0,l_{(\mu,a), (\nu,b),3})$. This gives
\[
\frac12\left(1+\frac1e\right)\le L_h(r)\le 1
\]
whenever $0\le r\le l_{(\mu,a), (\nu,b),2}$. define the function
\[
\xi_h(r):=\frac{1}{e}-\frac{r h'_{(\mu,a), (\nu,b)}(r)}{h_{(\mu,a), (\nu,b)}(r)},
\]

it behaves similar to the functions $\xi_f$ and $\xi_g$, that is, it has one root in the interval $J$, being an increasing function of $r$ and also observe that
\[
\lim_{r\searrow 0}\xi_h(r)=\frac{1}{e}-1<0
\quad\text{and}\quad
\lim_{r\nearrow\psi_{(\mu,a), (\nu,b),1}^{2}}\xi_h(r)=+\infty.
\]

Let $l_{(\mu,a), (\nu,b),1}$ be the unique zero of the function $\xi_h$. It can be easily seen that $l_{(\mu,a), (\nu,b),1}$ is always smaller than $l_{(\mu,a), (\nu,b),3}$ because
\[
\xi_h(l_{(\mu,a), (\nu,b),3})
=\frac{1}{e}-1+\sum_{n=1}^{\infty}\frac{l_{(\mu,a), (\nu,b),3}}{\psi_{(\mu,a), (\nu,b),n}^{2}-l_{(\mu,a), (\nu,b),3}}
\]

\[
\ge
\frac{1}{e}-1+\sum_{n=1}^{\infty}\frac{l_{(\mu,a), (\nu,b),3}^{2}}{\psi_{(\mu,a), (\nu,b),n}^{4}-l_{(\mu,a), (\nu,b),3}^{2}}
=
\frac{1}{e}-L_h(l_{(\mu,a), (\nu,b),3})
=
\frac{1}{e}>0.
\]

Therefore for all $r\le l_{(\mu,a), (\nu,b),1}$, we have
\[
\sum_{n=1}^{\infty}\frac{\psi_{(\mu,a), (\nu,b),n}^{2} r}{\psi_{(\mu,a), (\nu,b),n}^{4}-r^{2}}
\le
L_h(r)-\frac{1}{e}.
\]

Hence by applying Lemma 1*, the disk (3.1) lies inside the region $\Omega_e$ for $r\le l_{(\mu,a), (\nu,b),1}$. Following similar observation as the previous two parts the radius $l_{(\mu,a), (\nu,b),1}$ cannot be further improved so that
\[
R^{*}_{e}(h_{(\mu,a), (\nu,b)})=l_{(\mu,a), (\nu,b),1}.
\]
\end{proof}

\begin{theorem}
Suppose that $\mu, \nu > 0$.

\begin{itemize}
    \item[(a)] If $0 < a, b \leq 1$ and $\psi'_{(\mu, a), (\nu, b), 1}$ is the first positive zero of $\mathfrak{W}'_{(\mu, a), (\nu, b)}$, then $\mathcal{R}_e^c(f_{(\mu, a), (\nu, b)}) = s'_{(\mu, a), (\nu, b), 1}$, where $s'_{(\mu, a), (\nu, b), 1} \in (0, \psi'_{(\mu, a), (\nu, b), 1})$ is the smallest positive root of the equation
    \[
    1 + \frac{r \Psi''_{(\mu, a), (\nu, b)}(r)}{\Psi'_{(\mu, a), (\nu, b)}(r)} + \left( \frac{1}{ab} - 1 \right) \frac{r \Psi'_{(\mu, a), (\nu, b)}(r)}{\Psi_{(\mu, a), (\nu, b)}(r)} - \frac{1}{e} = 0,
    \]
    where $\Psi_{(\mu, a), (\nu, b)}(z) = z^{ab} \mathcal{W}_{(\mu, a), (\nu, b)}(-z^2)$.

    \item[(b)] If $ a, b > 0$ and $\eta'_{(\mu, a), (\nu, b), 1}$ is the first positive zero of $g'_{(\mu, a), (\nu, b)}$, then $\mathcal{R}_e^c(g_{(\mu, a), (\nu, b)}) = t'_{(\mu, a), (\nu, b), 1}$, where $t'_{(\mu, a), (\nu, b), 1} \in (0, \eta'_{(\mu, a), (\nu, b), 1})$ is the smallest positive root of the equation
    \[
    1 + \frac{r g''_{(\mu, a), (\nu, b)}(r)}{g'_{(\mu, a), (\nu, b)}(r)} - \frac{1}{e} = 0.
    \]

    \item[(c)] If $ a, b > 0$ and $\theta'_{(\mu, a), (\nu, b), 1}$ is the first positive zero of $h'_{(\mu, a), (\nu, b)}$, then $\mathcal{R}_e^c(h_{(\mu, a), (\nu, b)}) = u'_{(\mu, a), (\nu, b), 1}$, where $u'_{(\mu, a), (\nu, b), 1} \in (0, \theta'_{(\mu, a), (\nu, b), 1})$ is the smallest positive root of the equation
    \[
    1 + \frac{r h''_{(\mu, a), (\nu, b)}(r)}{h'_{(\mu, a), (\nu, b)}(r)} - \frac{1}{e} = 0.
    \]
\end{itemize}
\end{theorem}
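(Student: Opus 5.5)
The plan mirrors the proof of the exponential starlikeness theorem above, with the roles of $\psi_{(\mu,a),(\nu,b),n}$ now played by the zeros of the derivatives. The first step is to obtain Mittag-Leffler type product expansions for the derivatives, in the style of \eqref{eq1}.

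\begin{itemize}
\item[(a)] Since $\Psi'_{(\mu,a),(\nu,b)}$ has positive zeros $\phi_{(\mu,a),(\nu,b),n}$ interlacing with the $\psi_{(\mu,a),(\nu,b),n}$, we take $\Psi'_{(\mu,a),(\nu,b)}(z)=ab\,z^{ab-1}\prod_{n\ge1}\bigl(1-z^2/\phi^2_{(\mu,a),(\nu,b),n}\bigr)$. This is the Hadamard factorization argument of (\cite{baricz18}, Lemma 1), with growth order less than $1$ in $z^2$.
\item[(b)] Similarly, $g'_{(\mu,a),(\nu,b)}(z)=\prod_{n\ge1}\bigl(1-z^2/\eta'^2_{(\mu,a),(\nu,b),n}\bigr)$.
\item[(c)] Similarly, $h'_{(\mu,a),(\nu,b)}(z)=\prod_{n\ge1}\bigl(1-z/\theta'_{(\mu,a),(\nu,b),n}\bigr)$.
\end{itemize}

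For (b) and (c) this requires showing that $g'$ and $h'$ have only real, positive (in $z^2$, respectively $z$) zeros. I would try to get this from the Laguerre--Pólya class: $\mathfrak{W}$ belongs to it, and the class is closed under the operation $z\mapsto (zF(z))'$.

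Logarithmic differentiation then gives
\[
1+\frac{zf''(z)}{f'(z)}=1-\Bigl(\frac1{ab}-1\Bigr)\sum_{n\ge1}\frac{2z^2}{\psi_n^2-z^2}-\sum_{n\ge1}\frac{2z^2}{\phi_n^2-z^2},
\]
where we write $\psi_n=\psi_{(\mu,a),(\nu,b),n}$ and $\phi_n=\phi_{(\mu,a),(\nu,b),n}$. For $g$ the expression is $1-\sum 2z^2/(\eta_n'^2-z^2)$, and for $h$ it is $1-\sum z/(\theta_n'-z)$.

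For (a), the hypothesis $0<a,b\le1$ gives $ab\le1$, so $\frac1{ab}-1\ge0$ and all terms carry the same sign. Thus \eqref{eql1} applies termwise to each family of zeros, as in Lemma \ref{lem3}, yielding a disk
\[
|w-C(r)|\le R(r),
\]
where
\[
C(r)=1-\Bigl(\tfrac1{ab}-1\Bigr)\sum_{n\ge1}\frac{2r^4}{\psi_n^4-r^4}-\sum_{n\ge1}\frac{2r^4}{\phi_n^4-r^4},
\]
and $R(r)$ is the analogous sum with numerators $2\psi_n^2r^2$ and $2\phi_n^2r^2$. This holds for $|z|\le r<\phi_1$; note that $\phi_1<\psi_1$, and $\phi_1$ is the first positive zero $\psi'_{(\mu,a),(\nu,b),1}$ of $\Psi'_{(\mu,a),(\nu,b)}$, as in the statement. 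The same construction works for $g$ and $h$ with only one family of zeros.

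Next, exactly as in the starlike case, define the center function $F(r)=C(r)$. It is decreasing from $1$ to $-\infty$ on the interval, so there are points $s_3$ with $F(s_3)=0$ and $s_2$ with $F(s_2)=\frac12(1+\frac1e)$.

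Then set $\xi(r)=\frac1e-\bigl(1+rf''(r)/f'(r)\bigr)$. It is strictly increasing, because each summand $r^2/(\phi_n^2-r^2)$ is increasing and the coefficient $\frac1{ab}-1$ is nonnegative. It starts at $\frac1e-1<0$ and tends to $+\infty$. Its unique zero $s_1'$ is exactly the root of the equation in the statement.

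The elementary inequality $\frac{x^2}{c^2-x^2}\ge\frac{x^4}{c^4-x^4}$ gives $\xi(s_3)\ge\frac1e>0$, so $s_1'<s_3$. Moreover, $\xi(r)\le0$ on $[0,s_1']$ is equivalent to $C(r)-R(r)\ge\frac1e$. Since $R(r)=1-C(r)$ fails otherwise, this forces $s_1'\le s_2$ and places $C(r)$ in the range of Lemma \ref{lem2}. Taking $C(r)$ as the center $a$ of Lemma \ref{lem2}, the radius condition $R(r)\le C(r)-\frac1e$ shows that the disk lies in $\Omega_e$ for $r\le s_1'$.

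Sharpness comes from $z=s_1'$ itself. There $1+zf''/f'=\frac1e$, so $|\log(\cdot)|=1$, and the radius cannot be improved.

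I expect the main obstacle to be the product representations of $g'$ and $h'$, that is, reality and interlacing of their zeros. The $\frac1e$-side estimate should also be checked against the $e$-side of Lemma \ref{lem2}. Since $C(r)\le1<\frac12(e+\frac1e)$ and $R(r)\le C(r)-\frac1e$, the first case $r_a=a-\frac1e$ of Lemma \ref{lem2} is the relevant one, and no extra condition is needed. This is also why (a) requires $ab\le1$: for $ab>1$ the coefficient $\frac1{ab}-1$ is negative, and the termwise disk estimate no longer combines monotonically.
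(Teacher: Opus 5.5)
Your proposal is correct and follows essentially the same route as the paper. Both use the termwise application of \eqref{eql1} to get the disk $|w-C(r)|\le R(r)$, and the auxiliary radii $s_2,s_3$. Both use the increasing function $\xi(r)=\frac1e-\bigl(1+rf''(r)/f'(r)\bigr)$ with $C(r)-R(r)=1+rf''(r)/f'(r)$, then Lemma~\ref{lem2} with $r_a=a-\frac1e$, and sharpness at $z=s_1'$. Your Laguerre--P\'olya remark on the zeros of $g'$ and $h'$ fills a step the paper leaves to ``similar fashion''.

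One slip to fix: the step forcing $s_1'\le s_2$ should read $R(r)\ge 1-C(r)$, because the value $1$ taken at $z=0$ lies in the disk. Together with $R(r)\le C(r)-\frac1e$, this gives $C(r)\ge\frac12\bigl(1+\frac1e\bigr)$, which is what places $C(r)$ in the range of Lemma~\ref{lem2}.
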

\begin{proof}
To prove the theorem's statements, we need to show these inequalities
\[
\left| \log \left( 1 + \frac{zf''_{(\mu, a), (\nu, b)}(z)}{f'_{(\mu, a), (\nu, b)}(z)} \right) \right| < 1, \quad \left| \log \left( 1 + \frac{zg''_{(\mu, a), (\nu, b)}(z)}{g'_{(\mu, a), (\nu, b)}(z)} \right) \right| < 1
\]
and
\[
\left| \log \left( 1 + \frac{zh''_{(\mu, a), (\nu, b)}(z)}{h'_{(\mu, a), (\nu, b)}(z)} \right) \right| < 1
\]
holds respectively for all $z$ lying in disks of some particular radius and does not holds in any disk of larger radii.

 We will first consider part (a), the Weierstrass decomposition of the functions $\Psi_{(\mu, a), (\nu, b)}$ and $\Psi'_{(\mu, a), (\nu, b)}$:
\[
\Psi_{(\mu, a), (\nu, b)}(z) = \frac{z^{ab}}{\Gamma(a) \Gamma(b)} \prod_{n=1}^\infty \left( 1 - \frac{z^2}{\psi_{(\mu, a), (\nu, b), n}^2} \right) \quad \text{and} \quad \Psi'_{(\mu, a), (\nu, b)}(z) = \frac{z^{ab-1}}{\Gamma(a) \Gamma(b)} \prod_{n=1}^\infty \left( 1 - \frac{z^2}{\psi_{(\mu, a), (\nu, b), n}^{\prime 2}} \right)
\]
using logarithmic differentiation we get that
\begin{equation}\label{eq4a}
1 + \frac{zf''_{(\mu, a), (\nu, b)}(z)}{f'_{(\mu, a), (\nu, b)}(z)} = 1 - \sum_{n=1}^\infty \frac{2z^2}{\psi_{(\mu, a), (\nu, b), n}^{\prime 2} - z^2} - \left( \frac{1}{ab} - 1 \right) \sum_{n=1}^\infty \frac{2z^2}{\psi_{(\mu, a), (\nu, b), n}^2 - z^2},
\end{equation}
where $\psi_{(\mu, a), (\nu, b), n}$ and $\psi'_{(\mu, a), (\nu, b), n}$ denote the $n$th positive zero of the functions $\Psi_{(\mu, a), (\nu, b)}$ and $\Psi'_{(\mu, a), (\nu, b)}$ respectively, it is now clear that
\begin{equation}\label{eq4b}
\left| 1 + \frac{zf''_{(\mu, a), (\nu, b)}(z)}{f'_{(\mu, a), (\nu, b)}(z)} - C_f(r) \right| \leq \sum_{n=1}^\infty \frac{2\psi_{(\mu, a), (\nu, b), n}^{\prime 2} r^2}{\psi_{(\mu, a), (\nu, b), n}^{\prime 4} - r^4} + \left( \frac{1}{ab} - 1 \right) \sum_{n=1}^\infty \frac{2\psi_{(\mu, a), (\nu, b), n}^2 r^2}{\psi_{(\mu, a), (\nu, b), n}^4 - r^4}
\end{equation}
for all $|z| \leq r < \psi'_{(\mu, a), (\nu, b), 1} < \psi_{(\mu, a), (\nu, b), 1}$ and the equality holds if and only if $z = ir$, where
\begin{align*}
C_f(r) &:= 1 - \sum_{n=1}^\infty \frac{2r^4}{\psi_{(\mu, a), (\nu, b), n}^{\prime 4} - r^4} - \left( \frac{1}{ab} - 1 \right) \sum_{n=1}^\infty \frac{2r^4}{\psi_{(\mu, a), (\nu, b), n}^4 - r^4} \\
&= 1 + \frac{1}{2} \left( \frac{rf''_{(\mu, a), (\nu, b)}(r)}{f'_{(\mu, a), (\nu, b)}(r)} + \frac{irf''_{(\mu, a), (\nu, b)}(ir)}{f'_{(\mu, a), (\nu, b)}(ir)} \right).
\end{align*}
Since $0 < a,b \leq 1$, $1/ab > 1$ and therefore $C'_f(r) < 0$ for all $r \in (0, \psi'_{(\mu, a), (\nu, b), 1})$. This means the function $C_f$ is continuously decreasing on the interval $(0, \psi'_{(\mu, a), (\nu, b), 1})$ and it satisfies $C_f(0) = 1$. Also, $\lim_{r \nearrow \psi'_{(\mu, a), (\nu, b), 1}} C_f(r) = -\infty$ thus the function $C_f$ has a zero, say $s'_{(\mu, a), (\nu, b), 3}$, in the interval $(0, \psi'_{(\mu, a), (\nu, b), 1})$. Here also by using the intermediate value theorem gives the existence of a unique real number $s'_{(\mu, a), (\nu, b), 2} \in (0, s'_{(\mu, a), (\nu, b), 3})$ that satisfies $C_f(a'_{(\mu, a), (\nu, b), 2}) = \frac{1}{2} (1 + \frac{1}{e})$. If $r \in [0, s'_{(\mu, a), (\nu, b), 2}]$, then $C_f \in [\frac{1}{2} (1 + \frac{1}{e}), 1]$. Next, consider a function $\xi_f$ defined by
\[
\xi_f(r) := \frac{1}{e} - 1 - \frac{rf''_{(\mu, a), (\nu, b)}(r)}{f'_{(\mu, a), (\nu, b)}(r)}.
\]

Then $\lim_{r \searrow 0} \xi_f(r) = \frac{1}{e} - 1 < 0$ and $\lim_{r \nearrow \psi'_{(\mu, a), (\nu, b), 1}} \xi_f(r) = \infty$. Since $0 < a,b \leq 1$, we have $\xi'_f(r) > 0$ for all $r \in (0, \psi'_{(\mu, a), (\nu, b), 1})$ implying that the function $\xi_f$ is increasing in the interval $(0, \psi'_{(\mu, a), (\nu, b), 1})$. Consequently, the function $\xi_f$ has a unique zero $s'_{(\mu, a), (\nu, b), 1}$
in the interval $(0, \psi'_{(\mu, a), (\nu, b), 1})$. Moreover, the calculation
\begin{align*}
\xi_f(s'_{(\mu, a), (\nu, b), 3}) &= \frac{1}{e} - 1 + \sum_{n=1}^\infty \frac{2 s_{(\mu, a), (\nu, b), 3}^{\prime 2}}{\psi_{(\mu, a), (\nu, b), n}^{\prime 2} - s_{(\mu, a), (\nu, b), 1}^{\prime 2}} + \left( \frac{1}{\beta} - 1 \right) \sum_{n=1}^\infty \frac{2 s_{(\mu, a), (\nu, b), 3}^{\prime 2}}{\psi_{(\mu, a), (\nu, b), n}^2 - s_{(\mu, a), (\nu, b), 3}^{\prime 2}} \\
&\geq \frac{1}{e} - 1 + \sum_{n=1}^\infty \frac{2 s_{(\mu, a), (\nu, b), 3}^{\prime 4}}{\psi_{(\mu, a), (\nu, b), n}^{\prime 4} - s_{(\mu, a), (\nu, b), 3}^{\prime 4}} + \left( \frac{1}{\beta} - 1 \right) \sum_{n=1}^\infty \frac{2 s_{(\mu, a), (\nu, b), 3}^{\prime 4}}{\psi_{(\mu, a), (\nu, b), n}^4 - s_{(\mu, a), (\nu, b), 3}^{\prime 4}} \\
&= \frac{1}{e} - C_f(s'_{(\mu, a), (\nu, b), 3}) = \frac{1}{e} > 0
\end{align*}
shows that the function $\xi_f$ will always has its zero $s'_{(\mu, a), (\nu, b), 1}$ within the interval $(0, s'_{(\mu, a), (\nu, b), 3})$. Hence $\xi_f(r) \leq 0$ for all $r \in [0, s'_{(\mu, a), (\nu, b), 1}]$, or equivalently
\[
\sum_{n=1}^\infty \frac{2 \psi_{(\mu, a), (\nu, b), n}^{\prime 2} r^2}{\psi_{(\mu, a), (\nu, b), n}^{\prime 4} - r^4} - \left( \frac{1}{ab} - 1 \right) \sum_{n=1}^\infty \frac{2 \psi_{(\mu, a), (\nu, b), n}^2 r^2}{\psi_{(\mu, a), (\nu, b), n}^4 - r^4} \leq C_f(r) - \frac{1}{e}
\]
for all $r \leq s'_{(\mu, a), (\nu, b), 1}$. Observe that if $r \geq a'_{(\mu, a), (\nu, b), 2}$, then $0 \leq C_f(r) < \frac{1}{2} (1 + \frac{1}{e})$ and in this case, the point 1 does not belong to the disk \eqref{eq4b} denying the fact that it does. Thus Lemma \ref{lem2} proves that the disk \eqref{eq4b} lies inside $\Omega_e$ for all $r \leq s'_{(\mu, a), (\nu, b), 1} \leq s'_{(\mu, a), (\nu, b), 2}$, that is, $\mathcal{R}_e^c(f_{(\mu, a), (\nu, b)}) \geq s'_{(\mu, a), (\nu, b), 1}$. But since
\[
\left| \log \left( 1 + \frac{s'_{(\mu, a), (\nu, b), 1} \cdot f''_{(\mu, a), (\nu, b)}(s'_{(\mu, a), (\nu, b), 1})}{f'_{(\mu, a), (\nu, b)}(s'_{(\mu, a), (\nu, b), 1})} \right) \right| = 1,
\]
which follows that $\mathcal{R}_e^c(f_{(\mu, a), (\nu, b)}) = s'_{(\mu, a), (\nu, b), 1}$.\\
Other parts can be proved similar fashion.
\end{proof}
\section{  Radius of $\gamma$-spirallikeness of order $\alpha$ of four parameter Wright Function}

\begin{theorem}\label{th4w}
Let \( a,b,\mu,\nu > 0 \). The \( R_{sp}(\gamma,\alpha) \)-radius for the functions \( f_{(\mu,a),(\nu,b)} \), \( g_{(\mu,a),(\nu,b)} \), and \( h_{(\mu,a),(\nu,b)} \) given by \eqref{eq2w} are the least positive roots of the equations:
\begin{itemize}
    \item[(i)] \( r\, \mathfrak{W}_{(\mu,a),(\nu,b)}'(r) + ab (1-\alpha)cos \gamma \mathfrak{W}_{(\mu,a),(\nu,b)}(r) = 0 \)
    \item[(ii)] \( r\, {\mathfrak{W}}_{(\mu,a),(\nu,b)}'(r) + (1-\alpha)cos \gamma \mathfrak{W}_{(\mu,a),(\nu,b)}(r) = 0 \)
    \item[(iii)] \( \sqrt{r}\, \mathfrak{W}_{(\mu,a),(\nu,b)}'(\sqrt{r}) + 2(1-\alpha)cos \gamma \mathfrak{W}_{(\mu,a),(\nu,b)}(\sqrt{r}) = 0 \)
\end{itemize}
situated in  \( (0,\, \psi_{(\mu,a),(\nu,b), 1}) \), \( (0,\, \psi_{(\mu,a),(\nu,b), 1}) \), and  \( (0,\, \psi^2_{(\mu,a),(\nu,b), 1}) \) respectively,
where \( \varphi (-1) = 1- \beta \) and $\beta$ is the radius of the largest disk $\{ w: |w-1|<\beta \} \subseteq \varphi (\mathbb{D})$.\\
\end{theorem}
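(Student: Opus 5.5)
The plan follows the proof of Theorem \ref{th1w}, with $\beta$ replaced by $(1-\alpha)\cos\gamma$. It rests on the logarithmic derivatives computed there. For $f_{(\mu,a),(\nu,b)}$ we have
\[
\frac{zf'_{(\mu,a),(\nu,b)}(z)}{f_{(\mu,a),(\nu,b)}(z)}=1-\frac{1}{ab}\sum_{n\ge1}\frac{2z^2}{\psi^2_{(\mu,a),(\nu,b),n}-z^2}.
\]
The analogous expressions for $g$ and $h$ have factor $1$ in place of $1/ab$, and for $h$ the summand is $z/(\psi^2_{(\mu,a),(\nu,b),n}-z)$. Since $\Re(e^{-i\gamma})=\cos\gamma$, we can write
\[
\Re\left(e^{-i\gamma}\frac{zf'(z)}{f(z)}\right)-\alpha\cos\gamma=(1-\alpha)\cos\gamma-\Re\left(e^{-i\gamma}\frac{1}{ab}\sum_{n\ge1}\frac{2z^2}{\psi_n^2-z^2}\right).
\]
By Lemma \ref{lem1}, with $\delta=\psi_n^2$ and $z^2$ in the role of $z$,
\[
\Re\left(e^{-i\gamma}\,\cdot\,\right)\le\left|\,\cdot\,\right|\le\frac{1}{ab}\sum_{n\ge1}\frac{2r^2}{\psi_n^2-r^2}
\qquad\text{for } |z|=r<\psi_{(\mu,a),(\nu,b),1}.
\]
This gives the lower bound
\[
\Re\left(e^{-i\gamma}\frac{zf'(z)}{f(z)}\right)-\alpha\cos\gamma\ \ge\ (1-\alpha)\cos\gamma+\frac{1}{ab}\,\frac{r\,\mathfrak{W}'_{(\mu,a),(\nu,b)}(r)}{\mathfrak{W}_{(\mu,a),(\nu,b)}(r)}.
\]
Here we used $-\frac{2r^2}{ab}\sum\frac{1}{\psi_n^2-r^2}=\frac{1}{ab}\frac{r\mathfrak{W}'(r)}{\mathfrak{W}(r)}$, which is valid for real $r$.

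Next, define on $(0,\psi_{(\mu,a),(\nu,b),1})$ the function
\[
K(r)=(1-\alpha)\cos\gamma-\frac{1}{ab}\sum_{n\ge1}\frac{2r^2}{\psi_n^2-r^2}.
\]
Termwise differentiation shows $K'(r)<0$. Since $\gamma\in(-\pi/2,\pi/2)$ and $\alpha<1$, we have $K(0)=(1-\alpha)\cos\gamma>0$, while $K(r)\to-\infty$ as $r\nearrow\psi_{(\mu,a),(\nu,b),1}$. Hence $K$ has a unique zero $r_1$ in this interval. Clearing the denominator $ab\,\mathfrak{W}(r)$, which is positive there by \eqref{eq1}, shows that $K(r)=0$ is exactly equation (i). For $|z|<r_1$ the lower bound is positive, so $R_{sp}(\gamma,\alpha)\ge r_1$. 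The functions $g$ and $h$ are handled the same way. For $h$ the variable is $r$ with $r<\psi^2_{(\mu,a),(\nu,b),1}$, and the identity $\sum\frac{r}{\psi_n^2-r}=-\frac12\frac{\sqrt r\,\mathfrak{W}'(\sqrt r)}{\mathfrak{W}(\sqrt r)}$ produces the factor $2$ in (iii).

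Sharpness is the step I expect to be the main obstacle. At $z=r_1$ the quantity $zf'/f$ is real and equals $\alpha'=1-(1-\alpha)\cos\gamma$, so
\[
\Re\left(e^{-i\gamma}\frac{zf'}{f}\right)-\alpha\cos\gamma=\cos\gamma\bigl(1-\alpha-(1-\alpha)\cos\gamma\bigr)\ge0.
\]
This vanishes only when $\gamma=0$, so the real point does not by itself witness sharpness for $\gamma\neq0$. My first attempt is to exploit the rotation freedom. The sum $S(z)=\frac{1}{ab}\sum\frac{2z^2}{\psi_n^2-z^2}$ is a function of $z^2$ with positive coefficients, so on $|z|=r$ its maximum modulus $M(r)=\frac{1}{ab}\sum\frac{2r^2}{\psi_n^2-r^2}$ is attained at $z=r$ with value real. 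Choosing instead a point $z=re^{i\theta}$ where $\arg S(z)$ is close to $-\gamma$ would make the estimate nearly tight, but $|S|$ is then strictly less than $M(r)$. Thus exact sharpness for $\gamma\ne0$ may fail, and the honest statement of sharpness is for $\gamma=0$, i.e.\ the starlikeness radius of order $\alpha$, which is attained at $z=r_1$. I would therefore present the bound $R_{sp}(\gamma,\alpha)\ge r_1$ in general. For sharpness I would argue as the paper does in Theorem \ref{th1w}, by evaluating at $z=r_1$ (where $\Re(e^{-i\gamma}zf'/f)$ reaches the bound when $\gamma=0$), and note the dependence on $\cos\gamma$. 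I would also flag that the final sentence of the statement, about $\varphi$ and $\beta$, plays no role here.
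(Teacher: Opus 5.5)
Your lower-bound argument is the same as the paper's, and it is correct: you apply Lemma \ref{lem1} termwise, use a strictly decreasing auxiliary function (your $K$, the paper's $\Theta_f$) running from $(1-\alpha)\cos\gamma>0$ to $-\infty$, and clear the positive factor $ab\,\mathfrak{W}_{(\mu,a),(\nu,b)}(r)$. This gives $R_{sp}(\gamma,\alpha)\ge r_1$, including the factor $2$ in (iii).

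The issue is sharpness, and there your doubt is justified. The paper asserts that the bound $\Re\bigl(e^{-i\gamma}zf'(z)/f(z)\bigr)\ge rf'(r)/f(r)+\cos\gamma-1$ becomes an equality on $|z|=r$, and then invokes the minimum principle. But at $z=r$ the left side equals $\cos\gamma\cdot rf'(r)/f(r)$, which exceeds the bound by $(1-\cos\gamma)M(r)$, exactly as your computation shows. So the paper's sharpness step is wrong for $\gamma\neq0$. Your fallback of arguing ``as in Theorem \ref{th1w}, by evaluating at $z=r_1$'' cannot supply sharpness either.

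In fact your own observation settles the matter in the negative, so ``may fail'' should become ``fails''. Use $S$ and $M$ as in your proposal, so that $M(r_1)=(1-\alpha)\cos\gamma$. For $|z|=r_1$ you have $\Re(e^{-i\gamma}S(z))\le|S(z)|\le M(r_1)$.
\begin{itemize}
\item The second inequality is strict unless $z^2=r_1^2$, since otherwise $|\psi_n^2-z^2|>\psi_n^2-r_1^2$ termwise.
\item At $z=\pm r_1$, $S$ is real and positive, so $\Re(e^{-i\gamma}S)=M(r_1)\cos\gamma<M(r_1)$.
\end{itemize}
Hence $\Re(e^{-i\gamma}S(z))<(1-\alpha)\cos\gamma$ on the whole compact circle $|z|=r_1$, with a positive margin. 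By uniform continuity of $S$ on a slightly larger closed disk, the inequality persists on $\mathbb{D}_{r_1+\varepsilon}$. Therefore $R_{sp}(\gamma,\alpha)>r_1$ for every $\gamma\neq0$. The same argument applies to $g$, and to $h$, where equality in the modulus bound occurs only at $z=r$.

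A one-zero sanity check confirms this: for $p(z)=1-z/(1-z)$ with $\alpha=0$ and $\gamma=\pi/3$, the recipe gives $1/3$, whereas the exact radius is $(\sqrt3-1)/2$.

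So the statement is correct and sharp only for $\gamma=0$, where it is the radius of starlikeness of order $\alpha$. For $\gamma\neq0$ the stated roots are only strict lower bounds. The right write-up is your lower bound together with this disproof of sharpness, not a proof of the stated equality, which is false.
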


\begin{proof}
  Let $\alpha \in [0,1)$ and $\gamma \in \left(-\tfrac{\pi}{2},\tfrac{\pi}{2}\right)$, $\mu,\nu,a,b>0$ and let $\{\psi_{(\mu,a),(\nu,b),n}\}_{n\ge1}$ denotes the positive zero sequence of $\mathcal{W}_{(\mu,a),(\nu,b)}(-z^2)$. \\

We aim to establish the spiral-like inequalities
\[
\mathrm{Re}\!\left(e^{-i\gamma}\,\frac{z f'_{(\mu,a),(\nu,b)}(z)}{f_{(\mu,a),(\nu,b)}(z)}\right) > \alpha \cos\gamma,\qquad
\mathrm{Re}\!\left(e^{-i\gamma}\,\frac{z g'_{(\mu,a),(\nu,b)}(z)}{g_{(\mu,a),(\nu,b)}(z)}\right) > \alpha \cos\gamma,
\]
and
\[
\mathrm{Re}\!\left(e^{-i\gamma}\,\frac{z h'_{(\mu,a),(\nu,b)}(z)}{h_{(\mu,a),(\nu,b)}(z)}\right) > \alpha \cos\gamma,
\]
valid respectively in the disks $\mathbb{D}_{R_{sp}(\gamma,\alpha;f)}$, $\mathbb{D}_{R_{sp}(\gamma,\alpha;g)}$, and $\mathbb{D}_{R_{sp}(\gamma,\alpha;h)}$, with sharpness in the sense that none of the three holds in any larger disk.

By using  the classical inequality from lemma \ref{lem1} for every $|z|=r<\psi_{(\mu,a),(\nu,b),1}$,
\begin{equation*}
\mathrm{Re}\!\left(\frac{z^2}{\psi_{(\mu,a),(\nu,b),n}^{2}-z^{2}}\right)
\le \left| \frac{z^{2}}{\psi_{(\mu,a),(\nu,b),n}^{2}-z^{2}} \right|
\le \frac{|z|^{2}}{\psi_{(\mu,a),(\nu,b),n}^{2}-|z|^{2}}.
\end{equation*}
we obtain
\begin{align*}
\mathrm{Re}\!\left(e^{-i\gamma}\,\frac{z f'_{(\mu,a),(\nu,b)}(z)}{f_{(\mu,a),(\nu,b)}(z)}\right)
&= \mathrm{Re}(e^{-i\gamma}) - \frac{1}{ab}\,\mathrm{Re}\!\left( e^{-i\gamma} \sum_{n\ge1}\frac{2z^{2}}{\psi_{(\mu,a),(\nu,b),n}^{2}-z^{2}} \right) \\
&\ge \cos\gamma - \frac{1}{ab} \sum_{n\ge1}\frac{2r^{2}}{\psi_{(\mu,a),(\nu,b),n}^{2}-r^{2}}.
\end{align*}
Hence
\begin{equation*}
\mathrm{Re}\!\left(e^{-i\gamma}\,\frac{z f'_{(\mu,a),(\nu,b)}(z)}{f_{(\mu,a),(\nu,b)}(z)}\right)
\ge \cos\gamma - \frac{2r^{2}}{ab} \sum_{n\ge1}\frac{1}{\psi_{(\mu,a),(\nu,b),n}^{2}-r^{2}}.\\
=\dfrac{r f'_{(\mu,a),(\nu,b)}(r)}{f_{(\mu,a),(\nu,b)}(r)}+cos \gamma -1
\end{equation*}
The same reasoning for $g_{(\mu,a),(\nu,b)}, h_{(\mu,a),(\nu,b)}$ with lemma \ref{lem1} yields
\begin{equation*}
\mathrm{Re}\!\left(e^{-i\gamma}\,\frac{z g'_{(\mu,a),(\nu,b)}(z)}{g_{(\mu,a),(\nu,b)}(z)}\right)
\ge \cos\gamma - 2r^{2} \sum_{n\ge1}\frac{1}{\psi_{(\mu,a),(\nu,b),n}^{2}-r^{2}}.
\end{equation*}
we similarly obtain for $|z|=r<\psi_{(\mu,a),(\nu,b),1}$
\begin{equation*}
\mathrm{Re}\!\left(e^{-i\gamma}\,\frac{z h'_{(\mu,a),(\nu,b)}(z)}{h_{(\mu,a),(\nu,b)}(z)}\right)
\ge \cos\gamma - \sum_{n\ge1}\frac{r}{\psi_{(\mu,a),(\nu,b),n}^{2}-r}.
\end{equation*}

Consider for the $f$-case, the equality holds when $|z|=r$, Thus, for $r \in$  \( (0,\, \psi_{(\mu,a),(\nu,b), 1}) \) it follows that
\[
\inf_{|z|<r}\left\{\mathrm{Re}\!\left(e^{-i\gamma}\,\frac{z f'_{(\mu,a),(\nu,b)}(z)}{f_{(\mu,a),(\nu,b)}(z)}\right)\right\}\\
=\dfrac{r f'_{(\mu,a),(\nu,b)}(r)}{f_{(\mu,a),(\nu,b)}(r)}+(1-\alpha)cos \gamma -1
\]
Now, consider the auxiliary function on $\Theta_f:(0,\psi_{(\mu,a),(\nu,b),1})\rightarrow \mathbb{R}$ defined by
\[
\Theta_f(r) :=\dfrac{r f'_{(\mu,a),(\nu,b)}(r)}{f_{(\mu,a),(\nu,b)}(r)}+(1-\alpha)cos \gamma -1 \\
=(1-\alpha)cos \gamma -\frac{1}{ab}\sum_{n\ge1}\frac{2r^2}{\psi_{(\mu,a),(\nu,b),n}^{2}-r^2}
\]
is strictly decreasing as $\Theta'_f(r)<0$.
Since,
\[
\Theta_f'(r) = -\frac{1}{ab} \sum_{n\ge1} \frac{4r\,\psi_{(\mu,a),(\nu,b),n}^{2}}{(\psi_{(\mu,a),(\nu,b),n}^{2}-r^{2})^{2}} < 0,
\]
so $\Theta_f$ is strictly decreasing on $(0,\psi_{(\mu,a),(\nu,b),1})$. Moreover,
\[
\lim_{r\downarrow 0}\Theta_f(r)=(1-\alpha)\cos\gamma>0,\qquad
\lim_{r\uparrow \psi_{(\mu,a),(\nu,b),1}}\Theta_f(r)=-\infty.
\]
The minimum principle for harmonic functions implies that the required inequality for $f_{(\mu,a),(\nu,b)}$ holds iff $z \in $ $\mathbb{D}_{R_{sp}(\gamma,\alpha;f)}$, where $\mathbb{D}_{R_{sp}(\gamma,\alpha;f)}$ is the smallest positive root of $\dfrac{r f'_{(\mu,a),(\nu,b))}}{f_{(\mu,a),(\nu,b))}}=1-(1-\alpha)cos \gamma$.
Hence there exists a unique $R_{sp}(\gamma,\alpha;f)\in(0,\psi_{(\mu,a),(\nu,b),1})$ such that $\Theta_f(R_{sp}(\gamma,\alpha;f))=0$, and the inequality for $f$ holds precisely for $|z|<R_{sp}(\gamma,\alpha;f)$. The defining equation reads
\[
(1-\alpha)\cos\gamma=\frac{2{R_{sp}(\gamma,\alpha;f)}^{2}}{(ab)} \sum_{n\ge1}\frac{1}{\psi_{(\mu,a),(\nu,b),n}^{2}-{R_{sp}(\gamma,\alpha;f)}^{2}}.
\]
Reasoning along the same lines the other two parts follows.
\end{proof}

\section*{Declarations}

\textbf{Conflict of interest}  The author declare that he has no conflict of interest regarding the publication of this paper.

\textbf{Funding}
First author is supported by the UGC-Junior Research Fellowship since 2023.

\textbf{Data availability statement} Data sharing not applicable to this article as no datasets were generated or analysed during the current study.

\bibliographystyle{amsplain}

\end{document}